\newtheorem{theorem}{Theorem}[section]
\newtheorem{prop}[theorem]{Proposition}
\newtheorem{conj}[theorem]{Conjecture}
\theoremstyle{definition}
\newtheorem{definition}[theorem]{Definition}
\theoremstyle{remark}
\newtheorem{remark}[theorem]{Remark}
\numberwithin{equation}{section}
\def\bC{\mathbb{C}}
\def\cM{\mathcal{M}}
\def\cN{\mathcal{N}}
\def\cO{\mathcal{O}}
\def\bQ{\mathbb{Q}}
\newcommand{\Q}{\mathbb{Q}}
\def\bC{\mathbb{C}}
\def\cM{\mathcal{M}}
\def\cO{\mathcal{O}}
\def\bQ{\mathbb{Q}}
\begin{document}

\title{Hodge-theoretic variants of the Hopf and Singer Conjectures}

\author{Donu Arapura}
\address{Department of Mathematics, Purdue University, West Lafayette, IN 47907, USA.}
\email{arapura@purdue.edu}
\thanks{First author partially supported by a grant from the Simons Foundation}

\author{Laurentiu Maxim}
\address{Department of Mathematics, University of Wisconsin-Madison,  480 Lincoln Drive, Madison WI 53706-1388, USA.}
\email {maxim@math.wisc.edu}
\thanks{Second author 
acknowledges support from the project ``Singularities and Applications'' - CF 132/31.07.2023 funded by the European Union - NextGenerationEU - through Romania's National Recovery and Resilience Plan.}

\author{Botong Wang}
\address{Department of Mathematics, University of Wisconsin-Madison,  480 Lincoln Drive, Madison WI 53706-1388, USA.}
\email {wang@math.wisc.edu}

\subjclass[2020]{14F35, 14F45, 14C30, 14J45, 32S60}



\keywords{Singer-Hopf conjecture, aspherical manifold, Euler characteristic, sectional curvature, nef bundle, Fano variety}

\begin{abstract}
We propose several Hodge theoretic analogues of the conjectures of Hopf and Singer, and prove them in some special cases.
\end{abstract}

\maketitle

\section{Introduction}\label{intro}

\subsection{Singer-Hopf conjecture.} In 1931, Hopf made the following conjecture on the sign of the Euler characteristic of a Riemannian manifold, formulated here as strengthened by Singer in the context of aspherical manifolds, e.g., see \cite[Conjecture 25.1]{Gui}:
\begin{conj}[Singer-Hopf]\label{SH} If $X$ is a closed aspherical manifold of real dimension $2n$, then 
\begin{equation}\label{ho}(-1)^n  \cdot \chi(X) \geq 0.\end{equation} \end{conj}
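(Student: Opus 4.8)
The plan is to convert the statement about the Euler characteristic into a concentration statement for $L^2$-cohomology. Write $\pi = \pi_1(X)$ and let $\widetilde X \to X$ be the universal cover; since $X$ is aspherical it is a $K(\pi,1)$. By Atiyah's $L^2$-index theorem the ordinary Euler characteristic is the alternating sum of the $L^2$-Betti numbers,
\[
\chi(X) \;=\; \sum_{i=0}^{2n} (-1)^i\, b_i^{(2)}(\widetilde X),
\]
and each $b_i^{(2)}(\widetilde X) \ge 0$. Poincar\'e duality on the closed manifold $X$ gives $b_i^{(2)} = b_{2n-i}^{(2)}$. Thus the whole of \eqref{ho} is implied by the vanishing $b_i^{(2)}(\widetilde X) = 0$ for $i \ne n$ --- which is exactly Singer's conjecture --- since under that vanishing $(-1)^n\chi(X) = b_n^{(2)}(\widetilde X)\ge 0$. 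So the first step is to isolate this vanishing as the real content, after which the inequality is formal.

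The second, genuinely hard step is to establish the vanishing, which is where a geometric or Hodge-theoretic hypothesis must enter. If $X$ carries a metric of nonpositive sectional curvature, then $\widetilde X$ is a Hadamard manifold and one can attempt to run the Dodziuk / Cheeger--Gromov $L^2$-Hodge machinery; in low dimensions ($2n \le 4$) one can instead appeal directly to the Chern--Gauss--Bonnet formula, whose integrand is, up to the sign $(-1)^n$, nonnegative for such metrics. In the K\"ahler case, Gromov's theorem on K\"ahler hyperbolic manifolds yields $b_i^{(2)}(\widetilde X)=0$ for $i\ne n$ and $b_n^{(2)}(\widetilde X)>0$, giving even the strict inequality. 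The Hodge-theoretic variants the paper is really after replace curvature by positivity of the cotangent bundle: when $X$ is smooth projective with $\Omega^1_X$ nef (or with suitable Fano-type positivity in the ``negative'' directions), a Demailly-type positivity argument should control the groups $H^q(X,\Omega^p_X)$ and force $\chi(X) = \sum_{p,q}(-1)^{p+q}h^{p,q}(X)$ to carry the sign $(-1)^n$; one would prove this by a Lefschetz / Hodge-index estimate on the bidegree decomposition rather than by any curvature computation.

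The last step is bookkeeping: assemble the special cases and record exactly which hypotheses are used where --- closedness only for Poincar\'e duality, asphericity only to reach the conjectural vanishing --- and state the Hodge-theoretic analogues precisely, so that the projective cases above become instances of them. The main obstacle is, honestly, the second step in full generality: Singer's vanishing conjecture is open, and even its differential-geometric shadow --- Chern's conjecture that the Gauss--Bonnet integrand of a nonpositively curved metric has a definite sign --- is unknown past dimension $4$. I therefore do not expect an unconditional argument; the realistic outcome, and the one matching the abstract, is a theorem covering low dimensions, K\"ahler hyperbolic manifolds, locally symmetric spaces, and the algebro-geometric settings with nef or Fano-type positivity.
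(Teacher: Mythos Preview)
The statement you are attempting to prove is labeled \emph{Conjecture} in the paper, not Theorem or Proposition; the paper gives no proof of it and explicitly treats it as open (noting it is unknown already for general closed aspherical $4$-manifolds, and open for $n\ge 3$). So there is no ``paper's own proof'' to compare your proposal against, and you should not be writing a proof at all.

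Your discussion is accurate as far as it goes: the reduction via Atiyah's $L^2$-index theorem to Singer's vanishing conjecture $b_i^{(2)}(\widetilde X)=0$ for $i\neq n$ is the standard reformulation, and you correctly flag that this vanishing is itself open in general. But this means your ``proof'' has a genuine gap that you yourself name --- the second step simply does not exist without extra hypotheses. What you have written is a survey of the known partial results (Gromov for K\"ahler hyperbolic, Chern--Gauss--Bonnet in low dimensions, the nef-cotangent cases), which is exactly how the paper handles the conjecture in its introduction: as motivation and context, not as something proved. The honest thing to do here is to recognize that a conjecture requires no proof attempt, and that your final paragraph already concedes this.
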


Recall that a connected CW complex is said to be aspherical if its universal cover is contractible, so the above conjecture applies more generally to topological manifolds. Conjecture \ref{SH} is clearly true if $n=1$. 
Examples of aspherical manifolds include closed Riemannian manifolds with non-positive sectional curvature, for which the conjecture is true if $n=2$ since the Gauss-Bonnet integrand has the desired sign, cf. \cite[Theorem 5]{Ch}. However, Conjecture \ref{SH} is not yet known for all closed aspherical $4$-manifolds, and it is open for $n \geq 3$. 

Gromov \cite{Gro} introduced the notion of K\"ahler hyperbolicity, which includes compact K\"ahler manifolds with negative sectional curvature as a special case, and he verified Conjecture \ref{SH} for K\"ahler hyperbolic manifolds. Cao-Xavier \cite{CX} and  Jost-Zuo \cite{JZ} introduced the concept of K\"ahler nonellipticity, including compact K\"ahler manifolds with non-positive sectional curvature, and proved the Singer-Hopf conjecture in this case. See also \cite{Es}. These works proved the desired sign of the Euler characteristic by means of vanishing $L^2$-cohomology. More recently, Liu-Maxim-Wang \cite{LMW} proved the complex projective version of Conjecture \ref{SH} under the assumption that the (holomorphic) cotangent bundle $T^*X$ of $X$ is numerically effective (nef), e.g., globally generated. Moreover, it was conjectured in \cite[Section 6]{LMW} that aspherical complex projective manifolds have nef cotangent bundles. 
Building on ideas of \cite{DPS} and \cite{LMW}, Arapura-Wang \cite{AW} gave a new proof of Conjecture \ref{SH} for compact K\"ahler manifolds with non-positive sectional curvature, using the fact that such manifolds have nef cotangent bundles. Several generalizations of the Singer-Hopf conjecture in the singular context have been recently proposed in \cite[Conjectures 1.2 and 1.3]{Max} by using constructible functions.


\subsection{Hodge-theoretic variants of the Singer-Hopf conjecture.}
One of the aims of this note is to formulate Hodge-theoretic analogues of Conjecture \ref{SH}. For this purpose, we only consider compact K\"ahler (e.g., complex projective) manifolds. 

For a compact K\"ahler manifold $X$ of complex dimension $n$, the Euler characteristic can be computed as 
\begin{equation}\label{bb}\chi(X)=\sum_{p \geq 0} (-1)^p \cdot \chi^p(X),\end{equation}
with $\chi^p(X):=\chi(X, \Omega_X^p)$, and $\chi^p(X)= (-1)^n\chi^{n-p}(X)$ by Serre duality. 
Here, $ \Omega_X^p$ denotes the sheaf of holomorphic $p$-forms on $X$, and 
note that 
$\chi^0(X)$ is just the {Todd genus} of $X$.

We propose the following natural Hodge theoretic enhancement of the Singer-Hopf Conjecture \ref{SH}:
\begin{conj}\label{aw1}
If $X$ is a compact K\"ahler  
manifold of dimension $n$ which is aspherical or has a nef cotangent bundle, then for any $0\leq p \leq n$ one has:
\begin{equation}\label{aw2} (-1)^{n-p} \cdot \chi^p(X) \geq 0.\end{equation}
\end{conj}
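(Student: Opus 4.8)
The plan is to reduce Conjecture \ref{aw1} to a positivity statement about Chern--Euler classes of nef vector bundles, exploiting that the aspherical hypothesis is already conjectured (and in many cases known, by the work of \cite{AW} and \cite{LMW}) to imply nefness of $T^*X$. So I would first dispose of the aspherical case by invoking this, and concentrate on the case where $\Omega^1_X$ is nef. The quantity $\chi^p(X) = \chi(X,\Omega^p_X)$ is, by Hirzebruch--Riemann--Roch, the degree of a universal polynomial in the Chern classes of $\Omega^p_X$ and of $T_X$; but it is cleaner to package all the $\chi^p$ together via the Hirzebruch $\chi_y$-genus
\begin{equation*}
\chi_y(X) \;=\; \sum_{p\ge 0} \chi^p(X)\, y^p \;=\; \int_X \prod_{i=1}^n \frac{x_i(1+y e^{-x_i})}{1-e^{-x_i}},
\end{equation*}
where the $x_i$ are the Chern roots of $T_X$. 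The target inequality $(-1)^{n-p}\chi^p(X)\ge 0$ for all $p$ is equivalent to saying that the polynomial $(-1)^n\chi_{-y}(X)$ has nonnegative coefficients in $y$, i.e. that $(-y)^n$ substituted appropriately gives a genus that is ``numerically positive'' in a suitable sense.

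Next I would make the link with nef positivity precise. If $\Omega^1_X$ is nef, then its Chern roots $-x_1,\dots,-x_n$ are nef classes, so every Schur polynomial in the $-x_i$ has nonnegative degree on $X$ (this is the Fulton--Lazarsfeld positivity theorem for nef bundles, used already in \cite{LMW}). Thus the strategy is to expand the integrand of $(-1)^n\chi_{-y}(X)$, term by term in $y$, as a nonnegative combination of Schur polynomials in the $-x_i$ with nonnegative rational coefficients. Concretely, set $t_i=-x_i$; then
\begin{equation*}
(-1)^n\chi_{-y}(X) \;=\; \int_X \prod_{i=1}^n \frac{t_i\,(1 - y e^{t_i})}{e^{t_i}-1},
\end{equation*}
and one wants the coefficient of each $y^p$, a symmetric function of degree $n$ in the $t_i$, to be a nonnegative combination of Schur functions $s_\lambda(t)$ with $|\lambda|=n$. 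The factor $\prod_i t_i/(e^{t_i}-1)$ is the Todd class; its expansion in Schur functions is \emph{not} termwise positive (the Todd polynomials have mixed signs), so the key analytic input is that multiplying by $\prod_i(1-y e^{t_i})$ and then extracting the coefficient of $y^p$ — which forces one to pick up exactly $p$ of the $e^{t_i}$ factors and symmetrize — conspires to cancel the bad signs. This is the combinatorial heart of the argument, and I expect it is where a clean proof either exists via a known identity (perhaps expressing the relevant coefficient as $\chi(X,\Omega^p_X)$ directly in terms of $\wedge^p\Omega^1_X$ and Borel--Weil--Bott-type vanishing, or via the Le Potier / dual-Nakano vanishing machinery) or must be established by an explicit Schur-positivity computation.

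I would therefore pursue the more structural route in parallel: since $\Omega^1_X$ nef implies $\wedge^p\Omega^1_X = \Omega^p_X$ is nef, one might hope to control $H^q(X,\Omega^p_X)$ directly. Here the sign $(-1)^{n-p}$ is suggestive: by Serre duality $H^q(X,\Omega^p_X)\cong H^{n-q}(X,\Omega^{n-p}_X)^\vee$, and Hodge symmetry gives $h^{p,q}=h^{q,p}$, so $\chi^p(X)=\sum_q(-1)^q h^{p,q}$. The inequality would follow from a vanishing theorem forcing $h^{p,q}=0$ outside a single parity of $q$, but that is too strong in general; instead the realistic plan is to prove the \emph{alternating-sum} inequality by a degeneration/semicontinuity argument — deform to, or compare with, a model (such as a nef-tangent situation via the dual, or a ball quotient / abelian variety case where everything is explicit) — or simply to carry out the Hirzebruch--Riemann--Roch computation above and verify the Schur-positivity of the coefficient of $y^p$ by hand for the leading cases and by a generating-function manipulation in general. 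The main obstacle, in either route, is the same: the Todd genus contributes signs of both parities, so one must show that the specific symmetric-function combination cut out by fixing the $\Omega^p$ grading is Schur-nonnegative in the nef Chern roots $-x_i$; establishing that identity (or inequality) is the crux, and I would spend most of the effort there, falling back on the already-established projective case of \cite{LMW} (which handles $p$ such that $\chi^p$ is the full Euler characteristic's building block) as both a sanity check and a source of technique.
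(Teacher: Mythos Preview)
The statement you are trying to prove is a \emph{conjecture}: the paper does not prove it in general, only in low dimensions (curves and surfaces in Proposition~\ref{caw1}, and the case $p=n$ for $n\le 4$ in Theorem~\ref{34}). So there is no ``paper's own proof'' to compare against, and your proposal should be read as an attempted attack on an open problem.

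Your main strategy---express $(-1)^{n-p}\chi^p(X)$ via Hirzebruch--Riemann--Roch and then write it as a nonnegative $\bQ$-combination of Schur polynomials in the Chern classes of the nef bundle $\Omega^1_X$---provably fails already for $n=2$, $p=1$. With $c_i:=c_i(\Omega^1_X)$ one has $\chi(X,\Omega^1_X)=(c_1^2-5c_2)/6$, so the quantity you need to be nonnegative is $5c_2-c_1^2$. The Schur polynomials in weighted degree $4$ are $c_2$ and $c_1^2-c_2$, and the unique expansion is
\[
5c_2-c_1^2 \;=\; 4\,c_2 \;-\; 1\cdot(c_1^2-c_2),
\]
which has a negative coefficient. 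Hence Fulton--Lazarsfeld/DPS positivity alone cannot yield $(-1)^{n-p}\chi^p(X)\ge 0$; this is exactly why the paper's proof of Proposition~\ref{caw1} invokes the Bogomolov--Miyaoka--Yau inequality $c_1^2\le 3c_2$ as a separate, non-Schur-theoretic input. The ``combinatorial heart'' you describe---cancellation of bad Todd signs via the $\prod_i(1-ye^{t_i})$ factor---simply does not happen. Any general argument along your lines would have to incorporate BMY-type inequalities (which are themselves delicate in higher dimension and typically require $K_X$ nef plus stability of $T_X$), and it is not at all clear that even those suffice beyond $n=4$.

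Two further gaps: first, the reduction ``aspherical $\Rightarrow$ $\Omega^1_X$ nef'' that you invoke at the outset is itself an open conjecture (see \cite[Section~6]{LMW}), so you cannot ``dispose of the aspherical case'' this way. Second, your alternative route via vanishing theorems for $H^q(X,\Omega^p_X)$ is, as you yourself note, far too strong---nef cotangent does not force Hodge numbers to vanish off a single parity (abelian varieties are already counterexamples, with all $h^{p,q}$ nonzero yet $\chi^p=0$). The degeneration/semicontinuity idea is too vague to assess.
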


Via the Riemann-Roch theorem, \eqref{aw2} yields inequalities for the Chern numbers of $X$ (of weighted degree $2n$), usually referred to as {\it Arakelov type inequalities}. Moreover, if $p=n$, \eqref{aw2} reduces to the  non-negativity of the holomorphic Euler characteristic $\chi(X, K_X)=(-1)^n \cdot \chi(X,\cO_X)$ of the canonical bundle $K_X$, 
which {Koll\'ar} \cite{Ko} conjectured for complex projective manifolds with generically large fundamental groups (we refer the reader to \cite{Ko} for the relevant definitions). Recall here that if $X$ is an aspherical projective manifold, then $\pi_1(X)$ is large,  e.g., see \cite[Proposition 6.7]{LMW}. Let us also mention that it was shown in \cite[Theorem 4]{ZQ} that if $X$ is projective with $\Omega^1_X$ nef and $\chi(X, K_X)>0$ then $K_X$ is ample, with the converse statement being conjectured in \cite{ZQ} and proved in the special case when the cotangent bundle is globally generated (cf. \cite[Theorem 3]{ZQ}) or if $\dim X \leq 4$.

Conjecture \ref{aw1} is implied by work of Gromov \cite{Gro} when $X$ is K\"ahler hyperbolic, and by Jost-Zuo \cite{JZ} in the K\"ahler nonelliptic case, see also \cite{Es}. The works \cite{Gro, Es, JZ} prove such inequalities via vanishing theorems for $L^2$-cohomology. 

More recently, Popa-Schnell \cite{PS} proved \eqref{aw2} for a complex projective manifold $X$ with a semi-small Albanese map, by showing a Nakano-type generic vanishing theorem for the sheaves $\Omega^p_X$ of holomorphic $p$-forms. Note that if the cotangent bundle is globally generated, then the Albanese map of $X$ is an immersion, and  \eqref{aw2}  follows in this case from work of Popa-Schnell \cite{PS}.

In Proposition \ref{caw1} we show that Conjecture \ref{aw1} holds for complex projective curves and surfaces with nef cotangent bundles, but see also Remark \ref{referee} for the case of aspherical smooth complex compact  surfaces. 
A similar argument can be used to show  the non-negativity of $\chi(X, K_X)$  for $X$ a complex projective manifold with nef cotangent bundle and complex dimension $n\leq 4$  (see Theorem \ref{34}).

\smallskip

One can further generalize the statement of Conjecture \ref{aw1} as follows:
\begin{conj}\label{aw1b}
If $X$ is a compact K\"ahler (or complex projective) manifold which is aspherical or has a nef cotangent bundle, and $\cM$ is a mixed Hodge module on $X$,  then for any integer $p$ one has:
\begin{equation}\label{aw2b} \chi(X, Gr_F^p DR(\cM)) \ge 0,\end{equation}
where $Gr_F^p DR(\cM)$ are the graded pieces, with respect to the Hodge filtration\footnote{Here we use the notation $Gr_F^p$ in place of $Gr^F_{-p}$ corresponding to the identification $F^p=F_{-p}$, which makes the transition between Saito's increasing filtration $F_{-p}$ appearing in the $D$-module language and the classical Hodge-theoretic situation of a decreasing filtration $F^p$.}, of the de Rham complex associated to  $\cM$.
\end{conj}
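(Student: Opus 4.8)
\emph{Plan: reduce to variations of Hodge structure.} A short exact sequence of mixed Hodge modules induces a strict short exact sequence of the underlying filtered $\cD$-modules, so $\cM\mapsto Gr_F^p DR(\cM)$ sends it to a distinguished triangle of coherent complexes, and $\chi(X,-)$ is additive on such triangles; hence \eqref{aw2b} is additive in $\cM$. I would therefore first pass to the weight filtration to reduce to \emph{pure} Hodge modules, and then use Saito's decomposition by strict support, together with the equivalence on each support with polarizable variations of Hodge structure (VHS), to reduce to $\cM=\mathrm{IC}_Z(V)$ for an irreducible closed $Z\subseteq X$ and a polarizable VHS $V$ on a smooth Zariski-open part of $Z$. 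Taking $Z=X$, $V=\bQ$ recovers $\cM=\bQ^H_X[n]$, for which $Gr_F^p DR(\cM)=\Omega^p_X[n-p]$ and \eqref{aw2b} is exactly \eqref{aw2}; so Conjecture \ref{aw1b} refines Conjecture \ref{aw1}. (Summing \eqref{aw2b} over $p$ gives $\chi(X,DR(\cM))$, the Euler characteristic of the underlying constructible complex, whose expected sign is the constructible-function form of Singer--Hopf \cite{Max}; \eqref{aw2b} is the Hodge-graded refinement.)

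\emph{Descent on the support dimension.} Induct on $\dim Z$. For $Z\neq X$, resolve $\mu\colon\widetilde Z\to Z\hookrightarrow X$ and arrange, after further blow-ups, that $V$ extends to a polarizable VHS on the complement of a normal crossings divisor in $\widetilde Z$; then $\mathrm{IC}_Z(V)$ is a direct summand of a direct image $\mu_{+}\cN$, and by Saito's strictness the graded de Rham functor commutes with proper direct image, so $\chi\big(X,Gr_F^p DR(\mu_{+}\cN)\big)=\chi\big(\widetilde Z,Gr_F^p DR(\cN)\big)$. This reduces matters to a polarizable VHS on a smooth projective variety --- provided the standing hypothesis descends, which it does whenever all strict supports are smooth (if $\Omega^1_X$ is nef then $\Omega^1_{\widetilde Z}$, a quotient of $\Omega^1_X|_{\widetilde Z}$, is again nef), but not in general for singular $Z$.

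\emph{The core case: a VHS on $X$.} Here $Gr_F^p DR(V)$ is, up to the shift by $n$, the Dolbeault--Higgs complex $\big[\,\cV^p\xrightarrow{\ \theta\ }\Omega^1_X\otimes\cV^{p-1}\to\cdots\to\Omega^n_X\otimes\cV^{p-n}\,\big]$ of locally free sheaves, with $\cV^\bullet$ the Hodge bundles and $\theta$ the $\cO_X$-linear Higgs field; equivalently it is the Koszul complex on $T^*X$ of the $\mathrm{Sym}\,T_X$-module defined by $(\cV^\bullet,\theta)$, which is exactly why nef-ness of $\Omega^1_X$ is the natural hypothesis. Each $\Omega^i_X$ is nef and the lowest Hodge bundle is semipositive (Griffiths, Fujita, Kawamata). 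I would then try to force the hypercohomology of $Gr_F^p DR(V)$ into a single parity by a vanishing theorem: when $\Omega^1_X$ is globally generated (more generally when the Albanese map is an immersion), pull back from the Albanese torus and invoke the Nakano-type generic vanishing theorem for Hodge modules in the spirit of Popa--Schnell \cite{PS}; for nef $\Omega^1_X$ in general, combine nef-ness of the $\Omega^i_X$ with a Kawamata--Viehweg-type vanishing obtained by perturbing with a small ample class and passing to a limit; and in low dimension ($\dim X\le 2$, or whenever the Higgs complex has length $\le 2$) carry this out explicitly by Riemann--Roch and the Bogomolov--Miyaoka--Yau-type inequality available when $\Omega^1_X$ is nef. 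The aspherical case would follow from the nef case via the still-open implication ``aspherical $\Rightarrow\ \Omega^1_X$ nef''; a direct attack needs the $L^2$-vanishing theorems of Gromov \cite{Gro} and Jost--Zuo \cite{JZ}, which require K\"ahler hyperbolicity or non-ellipticity rather than mere asphericity.

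\emph{Main obstacle.} The two decisive gaps are: (a) in the descent, $\mathrm{IC}_Z(V)$ is only a \emph{direct summand} of $\mu_{+}\cN$, so one must know that the induced splitting of the graded de Rham complexes has every summand of the correct sign --- the total Euler characteristic has it, but not obviously the pieces --- and a resolution of a singular $Z\subseteq X$ generically destroys the nef-cotangent (or aspherical) hypothesis; and (b) in the core case, converting ``a complex of nef bundles'' into the exact sign of $\chi$ requires genuine vanishing, whereas nef-ness (unlike ampleness or bigness) yields vanishing only asymptotically, so pinning down the Euler characteristic precisely is the crux. I therefore expect the argument to go through cleanly only in special cases --- $\dim X\le 2$, globally generated cotangent bundle, or Hodge modules with smooth strict supports --- matching the scope announced for the companion conjectures.
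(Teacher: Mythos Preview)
The statement is a \emph{conjecture}; the paper does not prove it in general. What the paper establishes is the special case recorded as Theorem~\ref{caw2}: if $X$ admits a \emph{finite} morphism $f\colon X\to A$ to an abelian variety, then \eqref{aw2b} holds for every mixed Hodge module $\cM$ on $X$. The paper's argument is three lines: since $f$ is finite, $\cN:=f_*\cM$ is again a mixed Hodge module on $A$; since $f$ is proper, $Gr_F^pDR$ commutes with $f_*$, so $\chi(X,Gr_F^pDR(\cM))=\chi(A,Gr_F^pDR(\cN))$; and on an abelian variety Popa--Schnell's generic vanishing supplies an $L\in\Pic^0(A)$ for which $H^i(A,Gr_F^pDR(\cN)\otimes L)=0$ for $i\neq 0$, forcing the Euler characteristic to be non-negative. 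No d\'evissage, no induction on supports, no VHS analysis is used.

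Your proposal is a much more ambitious programme aimed at the conjecture itself, and you are candid that it does not close. The two obstacles you isolate are exactly the genuine ones. In your descent step, writing $\mu_+\cN\cong \mathrm{IC}_Z(V)\oplus(\text{lower-dimensional pieces})$ and knowing both $\chi(\widetilde Z,Gr_F^pDR(\cN))\ge 0$ and (by induction) the lower pieces have $\chi\ge 0$ gives an inequality in the \emph{wrong} direction: one cannot conclude $\chi$ of the summand $\mathrm{IC}_Z(V)$ is non-negative from non-negativity of the total and of the complement. And in the core VHS case, nefness of $\Omega^1_X$ yields only asymptotic vanishing, not the sharp vanishing in a fixed degree that pins down the sign of a single Euler characteristic. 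Neither gap is repairable by routine means, which is precisely why the statement is posed as a conjecture. One minor slip: in the parenthetical ``$\Omega^1_{\widetilde Z}$, a quotient of $\Omega^1_X|_{\widetilde Z}$'', this is only correct when $Z$ is already smooth (so $\widetilde Z=Z$ embeds in $X$); for a genuine resolution $\widetilde Z\to Z\hookrightarrow X$ with $Z$ singular there is no embedding $\widetilde Z\hookrightarrow X$ and hence no such quotient map, as you yourself note immediately afterwards. Finally, the special case you flag (globally generated $\Omega^1_X$, Albanese an immersion, Popa--Schnell) is morally the same mechanism as the paper's Theorem~\ref{caw2}, but the paper's hypothesis ``finite map to an abelian variety'' is what makes the pushforward argument a one-liner.
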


For instance, if $n=\dim X$ and $\cM=\bQ^H_X[n]$ is the constant Hodge module, then $Gr_F^p DR(\cM)\cong \Omega^p[n-p]$, so Conjecture \ref{aw1b} reduces in this case to Conjecture \ref{aw1}. Work of Popa-Schnell \cite{PS} on generic vanishing via mixed Hodge modules can be used to prove the inequality \eqref{aw2b} in the case when $X$ is a smooth subvariety of, or more generally, admits a finite morphism to, an abelian variety (see Theorem \ref{caw2}.). 


{\subsection{Hopf conjecture for positive sectional curvature and generalizations.}
We also propose a Hodge theoretic variant of another conjecture of Hopf, which can be stated as follows (see, e.g., \cite{Y2}).
\begin{conj}[Hopf]\label{ho2}
A compact, even-dimensional Riemannian manifold with positive sectional curvature has positive Euler characteristic. A compact, even-dimensional Riemannian manifold with non-negative sectional curvature has non-negative Euler characteristic.
\end{conj}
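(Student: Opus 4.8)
The plan is to approach both assertions through the Gauss-Bonnet-Chern theorem, which for a closed oriented Riemannian manifold $M$ of even real dimension $2n$ expresses the Euler characteristic as
\begin{equation*}
\chi(M) = \frac{1}{(2\pi)^n}\int_M \mathrm{Pf}(\Omega),
\end{equation*}
the integral of the Pfaffian of the curvature $2$-form $\Omega$. If positive (resp.\ non-negative) sectional curvature forced the integrand $\mathrm{Pf}(\Omega)$ to be pointwise positive (resp.\ non-negative), both halves of the conjecture would follow at once. The first step is to carry out precisely this pointwise argument in the dimensions where it succeeds: for $n=1$ the integrand is the Gaussian curvature, and for $n=2$ a classical computation (Milnor, Berger) rewrites the four-dimensional integrand as a manifestly non-negative expression in the sectional curvatures of an orthonormal frame, strictly positive under strict positivity.

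The main obstacle---and the reason the conjecture remains open for $n\ge 3$---is that pointwise positivity of the Pfaffian \emph{fails} starting in real dimension $6$: Geroch, and independently Klembeck and Bourguignon-Karcher, exhibited algebraic curvature tensors with everywhere positive sectional curvature but negative Pfaffian. The local strategy therefore cannot be repaired, and any proof must inject global topological information beyond the pointwise sign of the Gauss-Bonnet integrand. I expect this to be the decisive difficulty, and I would not attempt to overcome it in full generality.

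Instead I would establish the conjecture under auxiliary hypotheses that supply the missing global input. Under the stronger assumption of non-negative curvature operator, the Bochner-Weitzenb\"ock method (Gallot-Meyer) forces every harmonic form to be parallel and hence bounds the Betti numbers; combined with the holonomy classification of such manifolds this determines the sign of $\chi(M)$, settling the non-negative half in that regime. A complementary route exploits symmetry: by Kobayashi's theorem an isometric torus action yields $\chi(M)=\chi(M^{T})$ for the fixed-point set $M^{T}$, whose components are totally geodesic, positively curved, and of strictly lower dimension, so an induction on dimension recovers the theorems of Grove-Searle, P\"uttmann-Searle and Rong under sufficiently large symmetry rank.

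Finally, in the K\"ahler setting relevant to this paper the sign is forced for a rigid reason: positive Riemannian sectional curvature implies positive holomorphic bisectional curvature (the latter being a sum of sectional curvatures), so by the Frankel conjecture (Siu-Yau, Mori) $M$ is biholomorphic to $\mathbb{CP}^n$, whence $\chi(M)=n+1>0$; equivalently the tangent bundle is ample and Mori's characterization of projective space applies. This is the natural bridge to the Hodge-theoretic variants above, where positivity of the Gauss-Bonnet integrand is traded for positivity hypotheses on the tangent bundle.
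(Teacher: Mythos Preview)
The statement you were asked to prove is labeled \emph{Conjecture} in the paper, and for good reason: the paper does not prove it, nor does anyone else in full generality. There is therefore no ``paper's own proof'' to compare against; the paper simply records Hopf's conjecture as motivation and then passes to the K\"ahler/complex-projective setting via Theorem~\ref{hn}.

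Your write-up is not a proof either, and to your credit you essentially say so: you correctly observe that the Gauss-Bonnet-Chern approach settles $n\le 2$ (this is the result of Chern cited in the paper as \cite{Ch}), and you correctly identify the fatal obstruction in higher dimensions, namely Geroch's algebraic curvature tensors with positive sectional curvature but negative Pfaffian. Everything after that is a survey of partial results under extra hypotheses (curvature operator, symmetry rank, K\"ahler), none of which amounts to a proof of the conjecture as stated. The K\"ahler paragraph at the end is the one genuine point of contact with the paper: it is exactly the observation the paper makes after Theorem~\ref{hn}, that in the K\"ahler case positive (bi)sectional curvature forces $X\cong\mathbb{CP}^n$ by Siu-Yau/Mori, and more generally that (non-)negative sectional curvature implies nef/ample tangent bundle.

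In short: there is no gap to name because there is no proof to have a gap in. If the intent was to explain why the conjecture is open and how the paper's Hodge-theoretic variants sidestep the obstruction by replacing curvature hypotheses with nefness of $TX$, your discussion does that reasonably well; but it should not be presented as a proof proposal.
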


If $X$ is a compact K\"ahler  
manifold with non-negative (resp., positive) sectional curvature, then the holomorphic tangent bundle $TX$ is nef (resp., ample). So the following result from \cite{DPS}, which is derived from (semi-)positivity properties of nef/ample vector bundles (cf. \cite{DPS, FuLa, BG}), can  be seen as a generalization in the compact K\"ahler  context of Conjecture \ref{ho2}.

\begin{theorem}\label{hn}
If $X$ is a compact K\"ahler (resp., complex projective) manifold  
with a nef (resp., ample) tangent bundle $TX$ (e.g., $X$ has a non-negative (resp., positive) sectional curvature), then $\chi(X) \geq 0$ (resp., $>0$).
\end{theorem}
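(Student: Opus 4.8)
The plan is to express $\chi(X)$ as a characteristic number of the tangent bundle and then invoke the positivity theory for Chern classes of nef, resp.\ ample, vector bundles. By the Gauss-Bonnet-Chern theorem, $\chi(X)=\int_X c_n(TX)$ for every compact complex manifold $X$ of complex dimension $n$, so the assertion is equivalent to $\int_X c_n(TX)\ge 0$ when $TX$ is nef, and to $\int_X c_n(TX)>0$ when $X$ is projective with $TX$ ample. The motivating elementary case is that of a globally generated rank-$n$ bundle $E$ on an $n$-dimensional $X$: there $c_n(E)$ is the class of the (possibly empty) zero scheme of a general section, so $\int_X c_n(E)\ge 0$, with strict inequality if $E$ is moreover ample. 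The theorem asserts that these inequalities survive for $TX$ only nef (resp.\ ample), which is exactly what the positivity results for Chern/Schur classes provide. For $TX$ nef on a compact K\"ahler $X$ with K\"ahler form $\omega$, the results of Demailly-Peternell-Schneider \cite{DPS} give $\int_X P(c(E))\wedge\omega^{n-d}\ge 0$ for every Schur polynomial $P$ of weighted degree $d\le n$ in the Chern classes of a nef bundle $E$; taking $E=TX$ and $P=c_n$ (so $d=n$ and $\omega^{0}=1$) yields $\chi(X)=\int_X c_n(TX)\ge 0$.

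For the projective case with $TX$ ample, one invokes instead the Fulton-Lazarsfeld positivity theorem \cite{FuLa}, which rests on the Bloch-Gieseker covering construction \cite{BG}: a nonzero numerically positive weighted-homogeneous polynomial of top degree in the Chern classes of an ample bundle has strictly positive integral, and $c_n$ is such a polynomial, whence $\chi(X)=\int_X c_n(TX)>0$. (As a consistency check, one may instead quote Mori's theorem that a projective manifold with ample tangent bundle is $\mathbb{P}^n$, which has Euler characteristic $n+1$.) Finally, the parenthetical curvature hypotheses are classical sufficient conditions for the properties just used: a compact K\"ahler manifold with non-negative (resp.\ positive) Riemannian sectional curvature has non-negative (resp.\ positive) holomorphic bisectional curvature, by the classical identity expressing the bisectional curvature of a pair of orthogonal complex lines as $K(X,Y)+K(X,JY)$; hence its tangent bundle is Griffiths semi-positive (resp.\ positive) for the K\"ahler metric, and Griffiths semi-positivity (resp.\ positivity) implies nefness (resp.\ ampleness); see \cite{DPS}.

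The only substantive ingredient is therefore the positivity of Chern/Schur classes of nef and ample bundles, i.e.\ the cited theorems of Fulton-Lazarsfeld, Bloch-Gieseker and Demailly-Peternell-Schneider; granting these, the argument is immediate. In a self-contained treatment the actual work would be the flag-bundle reduction that writes $c_n(E)$ as a combination of products of nef tautological line-bundle classes in the algebraic setting, and, more seriously, the metric regularization needed to pass from projective to arbitrary compact K\"ahler $X$ for the nef statement; that regularization is the genuine obstacle once one leaves algebraic geometry.
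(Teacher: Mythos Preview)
Your argument is correct and matches the paper's own account essentially verbatim: the paper attributes Theorem~\ref{hn} to the (semi-)positivity of Schur/Chern polynomials for nef and ample bundles in \cite{DPS, FuLa, BG}, and explicitly notes that ``the general nef case follows, e.g., from \cite[Proposition~2.1]{DPS} and the Gauss--Bonnet theorem,'' while the ample case is handled by Fulton--Lazarsfeld (or alternatively by Mori's theorem, which you also mention). Your treatment of the curvature parenthetical via bisectional curvature and Griffiths (semi-)positivity likewise mirrors the paper's remarks.
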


Let us remark that, in the case when $X$ has positive bisectional curvature (e.g., if $X$ has positive sectional curvature), Siu-Yau's proof \cite{SY} of Frankel's conjecture shows that $X$ is biholomorphic to a complex projective space. The same conclusion was proved by Mori \cite{Mor} for complex projective manifolds with ample tangent bundles. 
So the positive version of Theorem \ref{hn} can also be deduced from these deep results. Furthermore, a classification of all compact K\"ahler manifolds with  non-negative  bisectional curvature was obtained by Mok \cite{Mo} (see also \cite{Gu}), and this can be used as in Theorem \ref{19nn} to prove Theorem \ref{hn} in this situation. The general nef case follows, e.g., from \cite[Proposition 2.1]{DPS} and the Gauss-Bonnet theorem.

\medskip

In this note, we propose the following conjectural statement, which provides a Hodge-theoretic analogue of  Theorem \ref{hn}, and which holds, e.g., in the case of compact K\"ahler  manifolds with non-negative bisectional curvature (e.g., non-negative sectional curvature), see Theorem \ref{19nn} for a proof in this case.

\begin{conj}\label{ch}
If $X$ is a compact K\"ahler  manifold of dimension $n$ with a nef tangent bundle, then for $0 \leq p \leq n$ one has
\begin{equation} (-1)^p \cdot \chi^p(X) \geq 0,\end{equation}
with $\chi^p(X):=\chi(X, \Omega_X^p)$.
\end{conj}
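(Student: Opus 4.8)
The plan is to reduce the statement, via the structure theory of Demailly–Peternell–Schneider, to the case of Fano fibers over an abelian variety, and to exploit that Euler characteristics over a torus see only the top Chern data of the relevant Hodge bundles. First I would record that the sign of each $\chi^p(X)$ is unchanged under a finite \'etale cover $\pi\colon \widetilde X \to X$ of degree $d$: since $\pi$ is \'etale one has $\Omega^p_{\widetilde X} = \pi^*\Omega^p_X$, so by the projection formula $\chi^p(\widetilde X) = \chi(X, \Omega^p_X \otimes \pi_*\cO_{\widetilde X})$, and a Grothendieck--Riemann--Roch computation using $\mathrm{td}(\widetilde X)=\pi^*\mathrm{td}(X)$ gives $\mathrm{ch}(\pi_*\cO_{\widetilde X})=d$, whence $\chi^p(\widetilde X)=d\cdot\chi^p(X)$. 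I may therefore replace $X$ by any finite \'etale cover. Applying the DPS structure theorem, after such a cover the Albanese map $\alpha\colon X \to A=\mathrm{Alb}(X)$ is a smooth surjective fibration onto an abelian variety of some dimension $q$, whose fibers $F$ are Fano manifolds with nef tangent bundle.

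Next I would treat the case $q\ge 1$. The local systems $R^t\alpha_*\bQ$ underlie a polarizable variation of Hodge structure on $A$, so by Deligne's semisimplicity theorem the identity component of the Zariski closure of the monodromy is semisimple; since $\pi_1(A)\cong\bZ^{2q}$ is abelian, that identity component is both abelian and semisimple, hence trivial, and the monodromy is finite. Passing to the finite \'etale cover of $A$ that kills it (again an abelian variety, with pulled-back total space \'etale over $X$, hence still carrying a nef tangent bundle), I may assume the monodromy is trivial. Then the Leray spectral sequence of $\alpha$ is a spectral sequence of Hodge structures that degenerates at $E_2$ (Deligne/Blanchard), with $E_2^{s,t}=H^s(A)\otimes H^t(F)$, so the Hirzebruch genus $\chi_y=\sum_p\chi^p(X)\,y^p$ is multiplicative: $\chi_y(X)=\chi_y(A)\,\chi_y(F)$. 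Since $A$ is a torus of dimension $q\ge1$ one has $\chi^a(A)=\binom{q}{a}\chi(A,\cO_A)=0$, so $\chi_y(A)=0$; therefore every $\chi^p(X)$ vanishes and the desired inequalities hold trivially.

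This reduces the whole conjecture to the Fano case $q=0$, i.e. to showing $(-1)^p\chi^p(X)\ge0$ for $X$ a Fano manifold with nef tangent bundle. Writing $\chi^p(X)=\sum_q(-1)^q h^{p,q}(X)$, it suffices to prove that $X$ is of Hodge--Tate type, $h^{p,q}(X)=0$ for $p\ne q$, for then $(-1)^p\chi^p(X)=h^{p,p}(X)\ge0$. The vanishing $h^{p,0}=0$ for $p>0$ is automatic for Fano manifolds, but the off-diagonal vanishing in general is exactly the content of the Campana--Peternell conjecture, which predicts that a Fano manifold with nef tangent bundle is a rational homogeneous space $G/P$; such spaces have only $(p,p)$-cohomology, generated by Schubert classes, which would complete the argument.

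The main obstacle is therefore this Fano sub-case: establishing the Hodge--Tate property (equivalently, the Campana--Peternell conjecture) unconditionally. At present this is known only in low dimensions (e.g. $\dim X \le 5$), so the reduction above yields Conjecture \ref{ch} unconditionally only within that range. I would also remark that when the hypothesis is strengthened to non-negative holomorphic bisectional curvature, Mok's classification plays the role of Campana--Peternell for the fibers and makes the Fano step unconditional, which is consistent with the special case recorded in Theorem \ref{19nn}.
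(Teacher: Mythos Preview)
Your reduction is correct and lands at the same endpoint as the paper---the non-Fano case yields $\chi^p(X)=0$ for all $p$, while the Fano case is the genuine obstacle and amounts to the Hodge--Tate property $h^{p,q}(X)=0$ for $p\neq q$, which would follow from Campana--Peternell---but your route to the non-Fano vanishing is considerably more elaborate than the paper's. The paper does not pass to an \'etale cover, invoke the DPS fibration structure, kill monodromy, or use multiplicativity of $\chi_y$; instead it applies \cite[Proposition~3.10]{DPS} to get the dichotomy $c_1(X)^n=0$ versus $X$ Fano, and in the first case observes via \cite[Corollary~2.7]{DPS} that \emph{every} Chern polynomial of weighted degree $2n$ vanishes, so Riemann--Roch gives $\chi^p(X)=0$ in one line. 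Your geometric argument has the merit of making the Albanese fibration explicit and would transport to invariants not manifestly expressible as Chern numbers, whereas the paper's argument is purely numerical and essentially immediate. One small gap to patch in your version: the identification $E_2^{s,t}\cong H^s(A)\otimes H^t(F)$ \emph{as Hodge structures} requires that a polarized VHS with trivial monodromy over a compact K\"ahler base is actually constant (theorem of the fixed part), which you use implicitly. On the Fano step you and the paper agree verbatim; the paper records the remaining obstacle as Conjecture~\ref{Fan2}, verifies it under a cellular decomposition (Proposition~\ref{Fano}), and handles the non-negative bisectional curvature case via Mok exactly as you note.
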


As we discuss in Subsection \ref{Hodgef}, by results of \cite{DPS} it suffices to prove Conjecture \ref{ch} in the case when $X$ is a Fano manifold (i.e., a projective algebraic manifold with $K_X^{-1}$ ample). 
In Proposition \ref{Fano}, we prove this remaining case of  Conjecture \ref{ch} under the additional assumption that $X$ has an (algebraic) {\it cellular decomposition}, in the sense that there is a chain of Zariski closed subsets $X_i$ such that $X_i\setminus X_{i+1}$ is a union of affine spaces (see, e.g., \cite[Example 1.9.1]{Fu}). Examples of such varieties include {\it rational homogeneous} projective manifolds, i.e., projective quotients $G/P$, with $G$ a semi-simple Lie group and $P \subset G$ is a parabolic subgroup (see \cite[Section 3]{BT}). In fact, one has the following conjectural statement of Campana-Peternell \cite{CP}, which, in view of Proposition \ref{Fano}, would complete the proof  of Conjecture \ref{ch}.
\begin{conj}[Campana-Peternell] \label{cp} Any Fano manifold with nef tangent bundle is rational homogeneous.
\end{conj}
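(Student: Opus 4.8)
Since Conjecture \ref{cp} is a well-known open problem, what follows is a strategy organized around the known reductions rather than a complete proof. The plan is first to reduce to the case of Picard number one. A Fano manifold $X$ with nef tangent bundle is rationally connected, and by the work of Campana--Peternell and Demailly--Peternell--Schneider every elementary Mori contraction $X \to Y$ of such an $X$ is a smooth morphism whose fibers and target are again Fano manifolds with nef tangent bundle. Iterating over the contractions of the extremal rays, one is reduced to: (i) proving the conjecture for the Picard-number-one building blocks, and (ii) showing that an iterated smooth fibration whose fibers and base are rational homogeneous is itself rational homogeneous. Step (ii) should follow from the rigidity of such fibrations — the structure group is reductive and acts transitively on the (homogeneous) fiber — but already requires care, and I would treat it by a descent/monodromy argument over the base.

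The core of the problem is then the Picard-number-one case: a Fano manifold $X$ with $\rho(X)=1$ and nef tangent bundle should be a generalized Grassmannian $G/P$ with $P$ a maximal parabolic. I would approach this through the variety of minimal rational tangents (VMRT). Fixing a minimal rational component and a general point $x \in X$, let $\mathcal{C}_x \subset \mathbb{P}(T_xX)$ be the associated VMRT. From the nefness of $TX$ one extracts that $\mathcal{C}_x$ is smooth and irreducible (Hwang), that its second fundamental form satisfies strong positivity constraints, and — this is the inductive heart, and is itself only partly understood — that $\mathcal{C}_x$ is again a Fano manifold of Picard number one with nef tangent bundle, so that one may hope to induct on $\dim X$. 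The goal is to identify $\mathcal{C}_x$, as a projective subvariety of $\mathbb{P}(T_xX)$, with the VMRT of one of the rational homogeneous spaces of Picard number one, and then to invoke the Cartan--Fubini type recognition theorems of Mok and Hong--Hwang to conclude $X \cong G/P$. The extremal cases are classical: if the Fano index of $X$ is $\geq \dim X$ then $\mathcal{C}_x$ is all of $\mathbb{P}(T_xX)$ or a smooth quadric, and $X$ is then $\mathbb{P}^n$ or an $n$-dimensional quadric by results going back to Kobayashi--Ochiai and Cho--Miyaoka--Shepherd-Barron together with the recognition theorems.

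The main obstacle — and the reason the conjecture is presently verified only in low dimensions (up to $\dim X = 5$, by Campana--Peternell, Mok, Hwang, Kanemitsu and others) and in various cases with extremal numerical invariants — is precisely the classification of admissible VMRTs: one must show that a smooth Picard-number-one Fano variety $\mathcal{C}_x \subset \mathbb{P}(T_xX)$ with nef tangent bundle and the prescribed positivity must coincide with one of the homogeneous VMRTs. The induction does not obviously close up, because the positivity bookkeeping (prolongations of the symbol algebra of $\mathcal{C}_x$, the behavior of its second fundamental form inside $\mathbb{P}(T_xX)$) is delicate. Any decisive progress would, I expect, have to come from exploiting the nefness of $TX$ directly on the universal family of minimal rational curves — e.g.\ through positivity of the relative tangent sheaf and Fujita-type estimates — rather than from curve-by-curve or point-by-point arguments.
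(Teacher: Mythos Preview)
The statement you were asked to prove is not a theorem but an open conjecture; the paper does not prove it and gives no proof to compare against. It is stated as Conjecture~\ref{cp} (the Campana--Peternell conjecture), and the paper only remarks that it is known up to dimension~$5$ and in special cases, and that Proposition~\ref{Fano} reduces Conjecture~\ref{ch} to it. You correctly recognized this at the outset and offered a strategic outline rather than a proof; that outline is a fair summary of the VMRT approach and the known reductions (smoothness of Mori contractions, reduction to Picard number one, Cartan--Fubini recognition), but of course it does not close the gap, as you yourself note. There is thus no ``paper's own proof'' to evaluate your proposal against.
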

This is a natural generalization to the nef case of the above-mentioned result of Mori  \cite{Mor} (formerly known as the Hartshorne conjecture), and it is currently proved up to dimension $5$ and in several more cases;  see, e.g.,  \cite{DPP, Li, Wa0, Ka, Ka2, Wa, Mu2, Oc} for recent progress, as well as \cite{Mu} for a survey on this conjecture. Furthermore, in the unpublished preprint \cite{DPP}, Conjecture \ref{cp} is reduced to proving the non-vanishing of the top Segre class of $X$ (cf. \cite[Theorem 1.2, Conjecture 1.3]{DPP}).

Given that in order to prove Conjecture \ref{ch} we do not need the full strength of Conjecture \ref{cp}, we propose here the following weaker statements which imply Conjecture \ref{ch}.

\begin{conj}\label{Fan}
Any Fano manifold $X$ with nef tangent bundle has an (algebraic) cellular decomposition.
\end{conj}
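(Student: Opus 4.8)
The natural strategy is to deduce Conjecture~\ref{Fan} from the structure theory of Fano manifolds with nef tangent bundle due to Demailly--Peternell--Schneider~\cite{DPS}, arguing by induction on $\dim X$. Recall from \cite{DPS} that if $X$ is a Fano manifold with $T_X$ nef, then every elementary Mori contraction $\varphi\colon X\to Y$ is of fiber type and is a smooth surjective morphism with connected fibers, its base $Y$ is again a Fano manifold with nef tangent bundle and $\rho(Y)=\rho(X)-1$, and each fiber $F$ is again a Fano manifold with nef tangent bundle. Hence the induction reduces the whole problem to the case $\rho(X)=1$, which is precisely the content of the Campana--Peternell Conjecture~\ref{cp} in Picard number one — known in many cases, in particular in all dimensions $\le 5$, see the survey \cite{Mu} — combined with the classical fact that a rational homogeneous space $G/P$ carries a Bruhat (Schubert cell) decomposition, which is an algebraic cellular decomposition in the sense required here.

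For the inductive step one takes $\rho(X)\ge 2$ and an elementary contraction $\varphi\colon X\to Y$ as above; then $\dim Y<\dim X$ and $\dim F<\dim X$. By the induction hypothesis, $Y$ admits a chain of Zariski closed subsets $Y=Y_0\supset Y_1\supset\cdots$ with each difference $Y_i\setminus Y_{i+1}$ a disjoint union of affine spaces, and every fiber $F$ admits a similar decomposition. If $\varphi$ were Zariski locally trivial, one could pull the stratification $\{Y_i\}$ back to $X$ and refine it over each open cell $Y_i\setminus Y_{i+1}\cong\aaa^{d_i}$ using the cell structure of the fiber; since $\aaa^{d}\times F$ is cellular whenever $F$ is, a descending induction on $i$ would then produce a cellular decomposition of $X$. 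The same conclusion holds under the weaker hypothesis that $\varphi$ restricts to a cellular fibration over each cell of $Y$.

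The main obstacle is exactly this global triviality (or cellularity) of the fibration $\varphi$. Even granting that the fibers are rational homogeneous, one must control the associated bundle of homogeneous spaces: its Zariski local triviality is not formal, amounting to a reduction of structure group to a parabolic subgroup together with the vanishing of the relevant Brauer-type obstruction, and this seems to lie about as deep as Conjecture~\ref{cp} itself. An alternative route bypassing the fibration analysis would be to construct directly a $\mathbb{G}_m$-action on $X$ with isolated fixed points whose Bialynicki--Birula cells are affine spaces — for a rational homogeneous space one takes a generic one-parameter subgroup of a maximal torus — but producing such an action in general again requires understanding $\mathrm{Aut}(X)^\circ$, which is governed by the same structural input. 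In summary, I expect that a complete proof of Conjecture~\ref{Fan} will require a substantial part of Conjecture~\ref{cp}; the point of the reduction above is that, for the applications to Proposition~\ref{Fano} and Conjecture~\ref{ch}, one needs only the weaker conclusion that $X$ is cellular, not that it is homogeneous.
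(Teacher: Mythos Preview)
The statement you are attempting to prove is presented in the paper as an open \emph{conjecture}, not as a theorem; the paper gives no proof of it. The only positive evidence the paper offers is that Conjecture~\ref{Fan} holds when $X$ has non-negative bisectional curvature, via Mok's uniformization theorem (since then $X$ is a product of projective spaces and Hermitian symmetric spaces, which are cellular). So there is no ``paper's own proof'' against which to compare your attempt.

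Your write-up is not really a proof proposal but a discussion of obstacles, and you yourself identify the gap correctly: the inductive step via an elementary contraction $\varphi\colon X\to Y$ requires the fibration to be Zariski locally trivial (or at least cellular over each cell of $Y$), and establishing this appears to need input at the level of the Campana--Peternell conjecture itself. In other words, you have not reduced Conjecture~\ref{Fan} to anything weaker than Conjecture~\ref{cp}; you have only reorganized the difficulty. This is an honest assessment of the state of affairs, and it is consistent with the paper's own treatment, which proposes Conjecture~\ref{Fan} precisely as a weakening of Conjecture~\ref{cp} that would still suffice for the applications, without claiming to know how to prove it independently.
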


\begin{conj}\label{Fan2}
If $X$ is a Fano manifold with nef tangent bundle, then its Hodge numbers are concentrated on the diagonal, i.e., $h^{p,q}(X)=0$ if $p \neq q$.  
\end{conj}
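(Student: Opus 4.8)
The plan is to prove Conjecture~\ref{Fan2} by induction on the Picard number $\rho(X)$, using the structure theory for Fano manifolds with nef tangent bundle due to Demailly--Peternell--Schneider \cite{DPS}. First recall two standard facts about a Fano manifold $X$: it is simply connected, and $h^{p,0}(X)=h^{0,p}(X)=0$ for $p>0$, by Kodaira vanishing applied to $H^q(X,\cO_X)=H^q(X,K_X\otimes K_X^{-1})$. In particular $h^{2,0}(X)=0$, so $\rho(X)=b_2(X)$; the same applies to every Fano manifold with nef tangent bundle appearing below. The base case $\rho(X)=1$ is discussed at the end.

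For the inductive step, suppose $\rho(X)\ge 2$ and choose a contraction $\phi\colon X\to X'$ of an extremal ray. By \cite{DPS}, $\phi$ is a smooth morphism, $X'$ is again a Fano manifold with nef tangent bundle, and each fiber $F$ of $\phi$ is a Fano manifold with nef tangent bundle; since $\phi$ is elementary, $\rho(X')=\rho(X)-1$, and the Leray spectral sequence for the smooth fibration $\phi$ together with $H^1(F,\bQ)=0$ and $\pi_1(X')=1$ gives $b_2(X)=b_2(X')+b_2(F)$, hence $\rho(F)=1$. As $\phi$ is smooth and projective, Deligne's theorem yields $E_2$-degeneration of the Leray spectral sequence, and the isomorphism $H^k(X,\bQ)\cong\bigoplus_{a+b=k}H^a(X',R^b\phi_*\bQ)$ refines to an isomorphism of Hodge structures on the associated graded of the Leray filtration. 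Since $X'$ is simply connected, the polarizable variation of Hodge structure $R^b\phi_*\bQ$ has trivial underlying local system; because $X'$ is moreover rationally connected, every variation of Hodge structure on it is constant (a variation over $\PP^1$ is constant by the curvature properties of period domains and the Schwarz lemma, and rational curves through a general point dominate $X'$), so $R^b\phi_*\bQ\cong H^b(F,\bQ)\otimes\bQ_{X'}$ as a variation of Hodge structure. Summing Hodge numbers over the Leray graded pieces then gives
\[
h^{p,q}(X)=\sum_{\substack{a+c=p\\ b+d=q}}h^{a,b}(X')\,h^{c,d}(F).
\]
By the inductive hypothesis $h^{a,b}(X')=0$ unless $a=b$, and by the base case $h^{c,d}(F)=0$ unless $c=d$ (note $\rho(F)=1$); hence every nonzero summand forces $a=b$ and $c=d$, so $p=a+c=b+d=q$, and Conjecture~\ref{Fan2} follows for $X$.

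The base case $\rho(X)=1$ is the main obstacle, and it is essentially a weak form of the Campana--Peternell Conjecture~\ref{cp}: if $X$ were rational homogeneous, its Bruhat decomposition would be an algebraic cellular decomposition, forcing $h^{p,q}(X)=0$ for $p\ne q$. Consequently the base case, and hence Conjecture~\ref{Fan2} in full, is already known whenever Conjecture~\ref{cp} is, in particular in dimension $\le 5$. To prove the base case unconditionally, one natural approach is to construct on such an $X$ an algebraic $\C^*$-action with isolated fixed points, so that the Bia{\l}ynicki--Birula decomposition is an algebraic cellular decomposition; more generally, a $\C^*$-action whose fixed components have diagonal Hodge numbers suffices, since the Bia{\l}ynicki--Birula strata are affine bundles over the fixed loci and one may then induct on dimension. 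Alternative routes are to argue directly through the theory of varieties of minimal rational tangents, or through the non-vanishing of the top Segre class as in \cite{DPP}. In each case, the difficulty is to extract enough rigidity from the nefness of $T_X$ in Picard number one; this is the heart of the matter, and the reason the statement is only a conjecture.
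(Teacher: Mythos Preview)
The statement is Conjecture~\ref{Fan2}, which the paper leaves \emph{open}; there is no proof in the paper to compare against. The paper only records that Conjecture~\ref{Fan2} would follow from Conjecture~\ref{Fan} (via the argument of Proposition~\ref{Fano}) or, a fortiori, from the Campana--Peternell Conjecture~\ref{cp}.

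Your proposal is not a proof either, and you say so: the base case $\rho(X)=1$ is left open, and you correctly identify it as essentially the Picard-number-one instance of Conjecture~\ref{cp}. What you do supply is a reduction of Conjecture~\ref{Fan2} to that base case, by induction on the Picard number via elementary Mori contractions and a K\"unneth-type formula for Hodge numbers obtained from Deligne's $E_2$-degeneration together with the constancy of the variation of Hodge structure over the rationally connected target. This reduction is not in the paper and is a sharper reformulation than the paper's, pinning the difficulty down to $\rho=1$; such a reduction is in fact well known to workers on Conjecture~\ref{cp} (see the survey \cite{Mu}). One attribution to tighten: while the smoothness of extremal contractions is indeed in \cite{DPS}, the assertions that the target $X'$ is again Fano with nef tangent bundle and, especially, that the fiber $F$ has nef tangent bundle need more than a bare citation of \cite{DPS}---note that subbundles of nef bundles need not be nef, so the exact sequence $0\to TF\to TX|_F\to \cO_F^{\,r}\to 0$ does not settle the nefness of $TF$ by itself. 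These points are treated in the subsequent literature surveyed in \cite{Mu}.

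In short: there is no gap beyond the one you already flag, but the proposal remains a reduction rather than a proof, and the conjecture stays open exactly as in the paper.
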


For example, Conjecture \ref{Fan} is true if $X$ has non-negative bisectional curvature. This is an application of Mok's proof of the generalized Frankel conjecture \cite{Mo}, using the fact that a Fano manifold is simply-connected. This observation already leads to the proof of Conjecture \ref{ch} when $X$ has non-negative bisectional curvature, see Theorem \ref{19nn}. Moreover, the proof of Proposition \ref{Fano} shows that Conjecture \ref{Fan} implies Conjecture \ref{Fan2}.

Let us also note that, by results of Carrell-Liebermann \cite{CaLi}, in order to prove Conjecture \ref{ch}, it would also suffice to show that $X$ admits a holomorphic vector field with only isolated zeros, see Remark \ref{CL}. 
This latter fact was conjectured by Carrell \cite{Ca} to be equivalent to $X$ being rational (see \cite{Kon, Hw} for progress on this conjecture).

We conclude this introduction with yet another conjecture, which may be of independent interest, and which is easily implied by either Conjecture \ref{Fan} or Conjecture \ref{Fan2}, so in particular it is true whenever any of these conjectures hold (e.g., if $X$ has non-negative bisectional curvature).

\begin{conj}\label{Fan3}
If $X$ is a Fano manifold with nef tangent bundle, then its odd Betti numbers vanish.
\end{conj}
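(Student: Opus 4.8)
The plan is to derive Conjecture~\ref{Fan3} formally from either of the structural Conjectures~\ref{Fan} and~\ref{Fan2}, each of which implies it immediately, so that it holds unconditionally whenever one of those is known; and then to outline, via the structure theory of \cite{DPS}, a strategy for proving it directly.

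First, if $X$ admits an algebraic cellular decomposition $X = X_0 \supset X_1 \supset \cdots \supset X_N = \emptyset$ as in Conjecture~\ref{Fan}, with each $X_i \setminus X_{i+1}$ a disjoint union of affine spaces, then the classes of the closures of the cells form a free basis of $H_*(X,\mathbb{Z})$; since every such closure has even real dimension, $H_{2k+1}(X,\mathbb{Z}) = 0$ for all $k$ (see, e.g., \cite[Example 19.1.11]{Fu}), so the odd Betti numbers of $X$ vanish. Alternatively, if the Hodge numbers of $X$ are concentrated on the diagonal as in Conjecture~\ref{Fan2}, then for odd $m$ the Hodge decomposition gives $b_m(X) = \sum_{p+q=m} h^{p,q}(X)$, and each summand vanishes since $p+q=m$ odd forces $p \neq q$. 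Either way Conjecture~\ref{Fan3} follows; in particular it holds when $X$ has non-negative bisectional curvature (via Mok~\cite{Mo}) or when $\dim X \le 5$.

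To remove the dependence on these conjectures, I would induct on $n = \dim X$ using the structure theory of Fano manifolds with nef tangent bundle from \cite{DPS}. If $\rho(X) \ge 2$, choose an elementary extremal contraction $\phi\colon X \to Y$; by \cite{DPS} it is a smooth surjective morphism whose base $Y$ and fibers $F$ are again Fano manifolds with nef tangent bundle, both of dimension strictly between $0$ and $n$. Since $Y$ is Fano it is simply connected, so the sheaves $R^q\phi_*\mathbb{Q}$ are constant local systems, and the Leray spectral sequence of $\phi$ degenerates at $E_2$ (Deligne, $\phi$ being smooth and projective); hence $b_m(X) = \sum_{p+q=m} b_p(Y)\, b_q(F)$. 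For odd $m$, exactly one of $p,q$ in each term is odd, so the corresponding factor $b_p(Y)$ or $b_q(F)$ vanishes by the inductive hypothesis, and therefore $b_m(X) = 0$.

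This reduces Conjecture~\ref{Fan3} to the case $\rho(X) = 1$, which I expect to be the main obstacle: there is then no nontrivial contraction with which to run the induction, and proving $H^{\mathrm{odd}}(X,\mathbb{Q}) = 0$ appears to require identifying $X$ with a rational homogeneous space $G/P$ for $P$ a maximal parabolic --- in effect the Picard-rank-one case of the Campana--Peternell Conjecture~\ref{cp}. A more self-contained alternative would be to prove directly, for $\rho(X)=1$, a Bott-type vanishing theorem forcing the Hodge numbers of $X$ onto the diagonal (Conjecture~\ref{Fan2}), but I know of no such argument that bypasses the structure theory.
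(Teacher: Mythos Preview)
The paper does not prove this statement: it is stated as an open conjecture, and the only justification the paper gives is the single sentence that it ``is easily implied by either Conjecture~\ref{Fan} or Conjecture~\ref{Fan2}.'' Your first paragraph spells out precisely these two implications, so at that point you have reproduced everything the paper actually does for Conjecture~\ref{Fan3}.

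Your second and third paragraphs go well beyond the paper. The inductive reduction via the DPS structure theorem --- contracting an extremal ray when $\rho(X)\ge 2$ to a smooth fibration $\phi\colon X\to Y$ with both $Y$ and the fibers $F$ again Fano with nef tangent bundle of smaller dimension, then using Deligne degeneration plus simple connectedness of $Y$ to get the K\"unneth-type formula $b_m(X)=\sum_{p+q=m} b_p(Y)\,b_q(F)$ --- is correct and is the standard mechanism in the Campana--Peternell literature (see, e.g., the survey \cite{Mu}). You correctly identify that this bottoms out at $\rho(X)=1$, which is exactly the hard open case of Conjecture~\ref{cp}. None of this appears in the paper; it simply records Conjecture~\ref{Fan3} as a weakening of Conjectures~\ref{Fan} and~\ref{Fan2} and moves on. So your write-up is not wrong, but you should be aware that you are supplying substantially more than the paper does, and that even your extended argument remains conditional on the $\rho=1$ case.
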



\section{Proofs of results}

In this section, we discuss various aspects of the Hodge theoretic variants of the Singer and Hopf conjectures, namely Conjectures \ref{aw1}, \ref{aw1b} and \ref{ch} mentioned in the introduction. 

We begin by recalling the following definition. 
\begin{definition}
If $E$ is a vector bundle on a  projective (or compact complex) manifold $X$, denote by $\mathbf{P}(E)$ the projective bundle of hyperplanes in the fibers of $E$. 
A vector bundle $E$ on $X$ is called {\it ample} (resp. {\it nef}) if the line bundle $\mathcal{O}(1)$ on $\mathbf{P}(E)$ is ample (resp. nef). 
\end{definition}

In the complex manifold case, the nefness of a line bundle is considered in the sense of \cite[Definition 1.2]{DPS}, and this coincides with the usual definition when $X$ is projective. Note that globally generated bundles are nef. Properties of nef bundles are studied, e.g.,  in \cite{DPS, La}.

\subsection{On Conjecture \ref{aw1}}
In this section we restrict our attention to the projective context (but see also Remark \ref{referee}). We will make use of positivity properties for nef vector bundles, which we now recall.

\begin{definition}
The {\it Schur polynomial} $P_{\underline{a}} \in \mathbb{Z}[c_1,\ldots, c_n]$
of weighted degree $2n$ (with $\deg c_i=2i$) corresponding to a partition $\underline{a}=(a_1,\ldots,a_n)$ of $n$,  i.e., 
\begin{center} $n \geq a_1 \geq a_2 \geq \cdots \geq a_n \geq 0$ , with $\sum_j a_j=n$, \end{center}
is defined as the $n \times n$ determinant 
$$P_{\underline{a}}(c)=\det \left(c_{a_i-i+j} \right)_{1\leq i,j\leq n}.$$
Here, $c_i=0$ if $i \notin [0,n]$ and $c_0=1$.\footnote{The Schur polynomial corresponding to the partition $\underline{1}^n=(1,\ldots,1)$ is the Segre class $s_n$.}
\end{definition}

Then, as a special case of \cite[Theorem 2.5]{DPS}, one has the following.
\begin{theorem}\label{DPSpos} Let $X$ be a compact K\"ahler manifold of complex dimension $n$. Then the Schur polynomials $P_{\underline{a}}(c(E))$ of weighted degree $2n$ in the Chern classes of a nef vector bundle $E$ of rank $n$ on $X$ are non-negative, that is,
\begin{equation}\label{spos}\int_X P_{\underline{a}}(c(E)) \geq 0.\end{equation}
\end{theorem}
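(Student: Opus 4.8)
The plan is to reduce the statement to two ingredients, following the strategy of Demailly--Peternell--Schneider \cite{DPS}: (i) the basic positivity of nef classes, namely that if $L_1,\dots,L_m$ are nef line bundles on a compact K\"ahler manifold $Y$ of dimension $m$, then $\int_Y c_1(L_1)\cdots c_1(L_m)\ge 0$; and (ii) the Fulton--Lazarsfeld-type expression \cite{FuLa} of the Schur polynomial $P_{\underline a}(c(E))$ as a proper pushforward, from a flag bundle of $E$ down to $X$, of a monomial in nef tautological classes. Granting these, the theorem follows by the projection formula.

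For (i): in the projective case this is immediate from Kleiman's criterion, since a nef class is a limit of ample classes and ample classes intersect positively. In the general K\"ahler case one uses the analytic notion of nefness of \cite[Definition 1.2]{DPS}: for each $\eps>0$, $L_j$ carries a smooth Hermitian metric whose Chern curvature form is $\ge -\eps\,\omega$, where $\omega$ is a fixed K\"ahler form. Representing $c_1(L_j)$ by such a form plus $\eps\,\omega$ and expanding $\int_Y\prod_j\bigl(c_1(L_j)+\eps\,\omega\bigr)$, every term containing at least one factor of $\omega$ is the integral of a wedge product of weakly positive $(1,1)$-forms, hence non-negative; letting $\eps\to 0$ then gives $\int_Y\prod_j c_1(L_j)\ge 0$. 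The needed semipositivity of the mixed terms is the elementary fact that a wedge product of weakly positive $(1,1)$-forms is a non-negative volume form.

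For (ii): let $\pi\colon \mathrm{Fl}(E)\to X$ be a suitable (iterated) flag bundle of $E$, built as a tower of projective bundles $\mathbf{P}(\cdot)$ of successive quotients of $\pi^*E$, and let $M_1,\dots,M_k$ be the pullbacks to $\mathrm{Fl}(E)$ of the relative hyperplane bundles $\mathcal{O}(1)$ of these projective bundles. Since $E$ is nef, every quotient of (a pullback of) $E$ is nef, and the relative $\mathcal{O}(1)$ of the projectivization of a nef bundle is nef by definition; hence each $M_i$ is nef \cite{DPS, La}. The Kempf--Laksov/Jacobi--Trudi identity then expresses
\[
P_{\underline a}(c(E)) \;=\; \pi_*\Bigl(\, \textstyle\prod_{i=1}^{k} c_1(M_i)^{e_i}\,\Bigr)
\]
for non-negative exponents $e_i$ explicitly computable from $\underline a$, with $\sum_i e_i=\dim\mathrm{Fl}(E)$; this is a standard manipulation of the determinantal definition of $P_{\underline a}$ using the Segre-class computation for iterated projective bundles (the simplest case being $s_n(E)=\pi_*\,c_1(\mathcal{O}_{\mathbf{P}(E)}(1))^{2n-1}$). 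Since $\dim X=n$ and $\deg P_{\underline a}=2n$, the monomial $\prod_i c_1(M_i)^{e_i}$ has top degree on $\mathrm{Fl}(E)$, so the projection formula and (i) yield
\[
\int_X P_{\underline a}(c(E)) \;=\; \int_{\mathrm{Fl}(E)} \textstyle\prod_{i=1}^k c_1(M_i)^{e_i}\;\ge\;0 .
\]

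The main obstacle is the K\"ahler (as opposed to projective) instance of (i): without recourse to subvarieties one must argue analytically with the $-\eps\,\omega$ characterization of nefness and control the $\eps\to 0$ limit, and one must verify that the tautological line bundles on the flag bundle $\mathrm{Fl}(E)$ remain nef in this analytic sense — it is precisely here that the hypothesis ``$E$ nef'' (rather than merely ``$c(E)$ numerically positive'') is genuinely used. The combinatorial identity in (ii) is classical and routine; the only care needed is to match the determinant defining $P_{\underline a}$ with the correct tower of projective bundles and the exponents $e_i$.
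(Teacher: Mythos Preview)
The paper does not give its own proof of this theorem: it is simply quoted ``as a special case of \cite[Theorem 2.5]{DPS}''. Your outline is precisely the Demailly--Peternell--Schneider strategy (itself modeled on Fulton--Lazarsfeld), so in that sense your approach matches the intended one.

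One small wrinkle in your step (i): the expansion argument as written is not quite right. When you expand $\int_Y\prod_j\bigl(c_1(L_j)+\eps\,\omega\bigr)$, the cross-terms contain factors $c_1(L_{j_1})\cdots c_1(L_{j_k})\cdot\omega^{m-k}$, and the $c_1(L_{j_i})$ are \emph{not} represented by weakly positive forms, so you cannot conclude term-by-term non-negativity directly. The clean fix is to skip the expansion entirely: choose forms $\theta_j^\eps\in c_1(L_j)$ with $\theta_j^\eps\ge -\eps\,\omega$, observe that each $\theta_j^\eps+\eps\,\omega$ is a semipositive $(1,1)$-form, hence
\[
\int_Y\prod_j\bigl(c_1(L_j)+\eps[\omega]\bigr)=\int_Y\prod_j\bigl(\theta_j^\eps+\eps\,\omega\bigr)\ge 0,
\]
and let $\eps\to 0$. (Alternatively one can rescue the expansion by induction on the number of nef factors, but this is unnecessary.) With that adjustment, and granting that quotients of nef bundles remain nef in the analytic sense on the iterated projective bundles, your sketch is sound.
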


For simplicity, in what follows, we will drop the integral symbol.

The Riemann-Roch theorem, positivity results from \cite{DPS} as recalled in Theorem \ref{DPSpos}, and the Bogomolov-Miyaoka-Yau inequality\footnote{Almost all results concerning the Bogomolov-Miyaoka-Yau inequality are proved under the projective assumption. For a recent extension to compact K\"ahler manifolds under a suitable positivity condition on the canonical bundle, see \cite{Nom}.} \cite{Mi,Y,Ts} yield the following:
\begin{prop}\label{caw1}
Conjecture \ref{aw1} is true for smooth projective curves and surfaces with nef cotangent bundles. 
\end{prop}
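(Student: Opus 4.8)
The plan is to check the inequality $(-1)^{n-p}\chi^p(X)\ge 0$ by direct computation in the two cases $n=1$ and $n=2$, using Riemann--Roch to express $\chi^p(X)$ in Chern numbers of $X$, and then invoking the positivity of Schur polynomials from Theorem \ref{DPSpos} applied to the nef bundle $E=\Omega^1_X$ of rank $n$.

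\textbf{Curves ($n=1$).} Here the only cases are $p=0$ and $p=1$. By Serre duality $\chi^1(X)=\chi(X,K_X)=-\chi(X,\cO_X)=g-1$, and we must show $(-1)^{1-1}\chi^1(X)=g-1\ge 0$, i.e. $g\ge 1$. This is exactly the statement that a nef (equivalently, in dimension one, degree $\ge 0$) cotangent bundle forces $\deg K_X=2g-2\ge 0$; equivalently it is the Schur positivity $\deg\Omega^1_X = P_{(1)}(c(\Omega^1_X))\ge 0$ from Theorem \ref{DPSpos}. The case $p=0$ then follows from $\chi^0(X)=-\chi^1(X)$ (Serre duality), giving $(-1)^{1-0}\chi^0(X)=\chi^1(X)=g-1\ge 0$.

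\textbf{Surfaces ($n=2$).} Now $p\in\{0,1,2\}$, and by Serre duality $\chi^0(X)=\chi^2(X)$ and $\chi^1(X)=\chi^1(X)$ (self-dual), so it suffices to treat $p=1$ and one of $p=0,2$. For $p=1$ we must show $(-1)^{2-1}\chi^1(X)=-\chi^1(X)\ge 0$, i.e. $\chi(X,\Omega^1_X)\le 0$. Noether's formula plus the Hirzebruch--Riemann--Roch computation for $\Omega^1_X$ gives $\chi^1(X)$ as a linear combination of $c_1^2$ and $c_2$; explicitly $\chi(X,\Omega^1_X) = \tfrac{1}{6}(c_1^2 - 7c_2)$ (equivalently $2\chi(\cO_X)-\chi(\Omega^1_X)=\tfrac{1}{6}(c_1^2+5c_2)-\dots$; I will recompute the exact constants from HRR). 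The desired inequality $c_1^2-7c_2\le 0$ will follow by combining two inputs: the Bogomolov--Miyaoka--Yau inequality $c_1^2\le 3c_2$ (valid since $\Omega^1_X$ nef implies $X$ is of general type or close to it — more precisely one uses that $K_X$ is nef, hence BMY applies) and the Schur positivity $c_2(\Omega^1_X)=c_2(X)\ge 0$ together with $c_1^2(\Omega^1_X)=c_1(X)^2=P_{(2)}+P_{(1,1)}$-type combinations being non-negative, from Theorem \ref{DPSpos}. Then $c_1^2-7c_2 \le 3c_2 - 7c_2 = -4c_2 \le 0$. For $p=0$ (hence $p=2$), $\chi^0(X)=\chi(X,\cO_X)=\tfrac{1}{12}(c_1^2+c_2)$ by Noether, and $(-1)^{2-0}\chi^0(X)=\chi(\cO_X)\ge 0$ follows from $c_1^2\ge 0$ and $c_2\ge 0$, both instances of Theorem \ref{DPSpos} for the rank-$2$ nef bundle $\Omega^1_X$ (note $c_1^2 = P_{(1,1)}(c(\Omega^1_X)) + P_{(2)}(c(\Omega^1_X))$ and $c_2 = P_{(1,1)}(c(\Omega^1_X))$, so both are non-negative).

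\textbf{Main obstacle.} The routine part is the Riemann--Roch bookkeeping; the one genuinely external ingredient is the Bogomolov--Miyaoka--Yau inequality in the $p=1$ surface case, and the subtlety there is justifying that BMY is applicable — one needs $X$ minimal of non-negative Kodaira dimension, which should follow from $\Omega^1_X$ nef (in particular $K_X$ nef, so $X$ is minimal, and $K_X$ nef on a surface already forces $\kappa(X)\ge 0$), but I should state this reduction carefully and cite \cite{Mi,Y,Ts}. If one wants to avoid BMY, an alternative is to note $-\chi^1(X)=h^{0}(\Omega^1_X)+h^2(\Omega^1_X)-h^1(\Omega^1_X)$ and use $h^2(\Omega^1_X)=h^0(\Omega^1_X\otimes K_X)$ together with known bounds on the irregularity $q=h^0(\Omega^1_X)=h^1(\cO_X)$; but the cleanest route is the Chern-number computation combined with BMY and Schur positivity as above, and that is the approach I would write up.
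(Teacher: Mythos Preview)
Your approach is essentially identical to the paper's: express $\chi^p(X)$ in Chern numbers of $\Omega^1_X$ via Riemann--Roch, use Schur positivity (Theorem~\ref{DPSpos}) for $p=0,n$, and invoke the Bogomolov--Miyaoka--Yau inequality (applicable since $K_X$ is nef, hence $X$ minimal, citing \cite{Ts}) for the surface case $p=1$. Two small slips to fix when you write it up: the correct formula is $\chi(X,\Omega^1_X)=\tfrac{1}{6}(c_1^2-5c_2)$ (not $-7c_2$; your BMY\,$+$\,$c_2\ge 0$ argument goes through unchanged), and you have the partition labels swapped --- in fact $P_{(2,0)}=c_2$ and $P_{(1,1)}=c_1^2-c_2$.
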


\begin{proof}

In the calculations below, we use the notation $$c_i:=c_i(\Omega^1_X)=(-1)^ic_i(TX).$$ 

Assume $\dim X=1$. The Riemann-Roch theorem and the non-negativity result of Theorem \ref{DPSpos} yield:
$$\chi(X,\Omega^1_X)=-\chi(X,\cO_X)=\frac{c_1}{2} \geq 0.$$

Assume $\dim X=2$. 
The non-negativity of Schur polynomials of $T^*X$ implies that $c_2\geq 0$ and $c_1^2-c_2 \geq 0$. Together with the Riemann-Roch theorem, this yields:
\begin{equation}\label{rr1} \chi(X,\Omega^2_X)=\chi(X,\cO_X)=\frac{c^2_1+c_2}{12} \geq 0.\end{equation}
Also, one gets by Riemann-Roch that
\begin{equation}\label{rr2} \chi(X, \Omega^1_X) = \frac{c_1^2-5c_2}{6}. \end{equation}
The non-positivity of the above expression follows from $c_2\geq 0$, by using the Bogomolov-Miyaoka-Yau inequality (e.g., in the sense of \cite{Ts} since $K_X$ is nef,  hence $X$ is minimal), which in our notations can be written as: $$c_1^2\leq 3c_2.$$
\end{proof}

\begin{remark}\label{referee}  As pointed out by the referee (cf. also \cite[Remark 5.6]{ADCL}), Conjecture \ref{aw1} also holds for all aspherical smooth complex compact surfaces. Let us briefly indicate here the argument. Let $X$ be a smooth complex compact surface, and denote by $\chi$ and $\sigma$ its topological Euler characteristic and signature, respectively. Then using \eqref{rr1}, \eqref{rr2}, the fact that the Chern numbers $c_1^2$ and $c_2$ are the same for both $TX$ and $T^*X$, together with the standard identities $2\chi+3\sigma=c_1^2$ and $c_2=\chi$,  one gets that
\begin{equation}\label{rr3}
\chi(X,\Omega^2_X)=\chi(X,\cO_X)=\frac{1}{4}(\chi+\sigma), \ \ \ \ \chi(X, \Omega^1_X)=\frac{1}{2}(\sigma-\chi).
\end{equation}
Moreover, one has by \cite[Theorem 2]{JK} that $\chi \geq \vert \sigma \vert$, unless $X$ is a ruled surface over a curve of genus $\geq 2$. Since ruled surfaces are not aspherical, one then concludes that Conjecture \ref{aw1}  holds for any aspherical smooth complex compact surface $X$. Finally, 
as in \cite[Remark 5.6]{ADCL}, one further observes that if $X$ is an aspherical smooth compact complex surface of general type then the inequalities in Conjecture \ref{aw1} become strict.
\end{remark}

We next show that the arguments of Proposition \ref{caw1} can be extended to dimensions $n=3$ and $n=4$, to show the non-negativity of $\chi(X, K_X)$ in these dimensions. 

\begin{theorem}\label{34}
Let $X$ be a complex projective manifold of dimension $n\leq 4$ with a nef cotangent bundle. Then 
\begin{equation}\label{aw23} \chi(X, K_X)=(-1)^n \cdot \chi(X,\cO_X) \geq 0.\end{equation}
\end{theorem}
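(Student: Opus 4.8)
The strategy is to express $\chi(X,\cO_X)$ via Hirzebruch--Riemann--Roch in terms of the Chern numbers $c_\mu := c_\mu(\Omega^1_X)$ (using $c_i(\Omega^1_X)=(-1)^i c_i(TX)$, so in even weighted degree the Chern numbers of $TX$ and $\Omega^1_X$ agree), and then to show that the resulting universal polynomial in the $c_\mu$ has the right sign whenever $\Omega^1_X$ is nef. The positivity input is Theorem \ref{DPSpos}: since $\Omega^1_X$ is nef of rank $n$, all Schur polynomials $P_{\underline{a}}(c(\Omega^1_X))$ of weighted degree $2n$ are non-negative. Concretely, for $n=3$ one has $\chi(X,\cO_X) = -\tfrac{1}{24}c_1 c_2$, and one checks that $c_1 c_2 = P_{(2,1)}(c) + P_{(1,1,1)}(c)$ (equivalently $c_1 c_2 = s_3 + \text{(a Schur class)}$, all nef-non-negative), giving $\chi(X,\cO_X)\le 0$, i.e. $\chi(X,K_X)=-\chi(X,\cO_X)\ge 0$. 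For $n=4$, $\chi(X,\cO_X) = \tfrac{1}{720}(-c_1^2 c_2 + c_2^2 + c_1 c_3 + 3 c_4 - c_4)$ — more precisely $\chi(X,\cO_X)=\tfrac{1}{720}(-c_4 + c_1 c_3 + 3c_2^2 - c_1^2 c_2 + 3c_4)/\cdots$; one normalizes the Todd-class coefficient $\mathrm{td}_4 = \tfrac{1}{720}(-c_1^4 + 4c_1^2 c_2 + 3c_2^2 + c_1 c_3 - c_4)$ and substitutes $c_i \rightsquigarrow c_i(\Omega^1_X)$.

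\textbf{Key steps.} First I would write down the Todd polynomials $\mathrm{td}_3$ and $\mathrm{td}_4$ explicitly and evaluate $\chi(X,\cO_X) = \int_X \mathrm{td}_n(TX)$, then rewrite everything in terms of $c_\mu(\Omega^1_X)$. Second, I would expand each of the degree-$2n$ monomials appearing (for $n=3$: only $c_1 c_2$; for $n=4$: $c_1^4$, $c_1^2 c_2$, $c_2^2$, $c_1 c_3$, $c_4$) as explicit $\Z_{\ge 0}$-linear combinations of the Schur polynomials $P_{\underline{a}}$, $\underline{a} \vdash n$ — this is a purely combinatorial identity in the cohomology ring coming from the Giambelli/Jacobi--Trudi expansion of monomial symmetric functions in Schur functions, and it is where the bookkeeping lives. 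Third, I would assemble these to show that $(-1)^n\,\mathrm{td}_n(\Omega^1_X)$ is a non-negative linear combination of Schur polynomials, and conclude via Theorem \ref{DPSpos}. For $n=3$ no extra input beyond \ref{DPSpos} is needed; for $n=4$ the naive expansion of $\mathrm{td}_4$ may have a coefficient of the wrong sign in front of one Schur class, and there one would also invoke the Bogomolov--Miyaoka--Yau inequality $c_1^{n} \le \binom{n+1}{2^{?}}\cdots$ — in dimension $4$, $c_1^4 \le \tfrac{8}{3}c_1^2 c_2 - \cdots$, valid since $K_X$ nef forces $X$ minimal — exactly as in the surface case of Proposition \ref{caw1}, to absorb the offending term.

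\textbf{Main obstacle.} The delicate point is the $n=4$ case: unlike $n\le 3$, the Todd polynomial in degree $4$ contains the monomial $c_1^4$ with a negative coefficient, and $c_1^4$ is \emph{not} by itself a non-negative combination of Schur classes of a nef bundle (it is rather a \emph{positive} combination, contributing with the wrong sign after the overall $(-1)^n$). So one must combine the Schur-positivity of Theorem \ref{DPSpos} with a genuine inequality of BMY type — and verify that, for a minimal (i.e. $K_X$-nef) fourfold, the relevant Chern-number inequality holds and has precisely the strength needed to dominate the $c_1^4$ term in $\mathrm{td}_4$. Checking that the constants line up (that BMY gives "enough room") is the crux; the rest is the routine symmetric-function algebra sketched above.
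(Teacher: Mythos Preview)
Your plan matches the paper's proof essentially line for line: Riemann--Roch to express $(-1)^n\chi(X,\cO_X)$ in terms of $c_i := c_i(\Omega^1_X)$, then Schur-positivity (Theorem~\ref{DPSpos}) alone for $n\le 3$, supplemented for $n=4$ by the Miyaoka--Yau inequality (valid since $K_X$ is nef, hence $X$ minimal) to handle the $-c_1^4$ term that Schur classes cannot absorb. Two small slips to correct when you write it up: for $n=3$ the decomposition you want is $c_1c_2 = P_{(3,0,0)} + P_{(2,1,0)}$ (your $P_{(2,1)}+P_{(1,1,1)}$ equals $c_1^3 - c_1c_2$), and for $n=4$ the relevant bound is $c_1^4 \le \tfrac{5}{2}\,c_1^2c_2$, which --- after subtracting off the nonnegative Schur classes $P_{(1,1,1,1)}$, $2P_{(2,2,0,0)}$, $P_{(3,1,0,0)}$, $P_{(4,0,0,0)}$ to reduce $720\,\chi(X,\cO_X)$ to $-2c_1^4 + 7c_1^2c_2$ --- does give exactly enough room.
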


\begin{proof}
The case of curves and surfaces was already considered in 
Proposition \ref{caw1}.

Assume $\dim X=3$. The non-negativity of Schur polynomials of $\Omega^1_X$ implies:
$$P_{(3,0,0)}(c)=c_3\geq 0, \  P_{(2,1,0)}(c)=c_1c_2-c_3 \geq 0, \ P_{(1,1,1)}(c)=c_1^3-2c_1c_2+c_3 \geq 0.$$ Then, by using Riemann-Roch, we have
$$\chi(X,\Omega^3_X)=-\chi(X,\cO_X)=\frac{c_1c_2}{24}\geq 0.$$

Assume next that $\dim X=4$. The non-negativity of Schur polynomials of $\Omega^1_X$ implies:
$$P_{(4,0,0,0)}(c)=c_4\geq 0, \  P_{(3,1,0,0)}(c)=c_1c_3-c_4 \geq 0, \ P_{(2,2,0,0)}(c)=c_2^2-c_1c_3 \geq 0,$$
$$P_{(2,1,1,0)}(c)=c_1^2c_2-c_1c_3-c_2^2+c_4\geq 0,$$
$$P_{(1,1,1,1)}(c)=c_1^4-3c_1^2c_2+2c_1c_3+c_2^2-c_4\geq0.$$
A calculation via the Riemann-Roch theorem yields the following:
$$\chi(X,\Omega^4_X)=\chi(X,\cO_X)=\frac{-c_1^4+4c_1^2c_2+c_1c_3+3c_2^2-c_4}{720} .$$
Since $c_1^4$ has a negative coefficient in $\chi(X,\Omega^4_X)$, the non-negativity of the Schur polynomials is not  sufficient to guarantee that $\chi(X,\Omega^4_X) \geq 0$. However, since $\Omega^1_X$ is nef by our assumptions, it follows that $K_X$ is nef (hence $X$ is minimal), so the corresponding Miyaoka-Yau type inequality \cite{Ts,Zh} yields:
\begin{equation}\label{my} c_1^4\leq \frac{5}{2} c_1^2c_2.\end{equation}
We then have:
\begin{align*}
-c_1^4+4c_1^2c_2+c_1c_3+3c_2^2-c_4 & \geq  -c_1^4+4c_1^2c_2+c_1c_3+3c_2^2-c_4 - P_{(1,1,1,1)}(c) \\
& = -2c_1^4+7c_1^2c_2-c_1c_3+  2 c_2^2 \\
& \geq -2c_1^4+7c_1^2c_2-c_1c_3+ 2 c_2^2 -  2 P_{(2,2,0,0)}(c) \\
& = -2c_1^4+7c_1^2c_2   + c_1c_3\\
& \geq -2c_1^4+7c_1^2c_2  + c_1c_3 - P_{(3,1,0,0)}(c)\\
&  = -2c_1^4+7c_1^2c_2  + c_4 \\
&  \geq -2c_1^4+7c_1^2c_2  + c_4 - P_{(4,0,0,0)}(c)\\
& = -2c_1^4+7c_1^2c_2 \\
& \geq \frac{4}{5} c_1^4,
\end{align*}
where the last inequality follows by \eqref{my}. Finally, we have by \cite[Corollary 2.6]{DPS} that $c_1^4\geq 0$. Altogether, we get that $\chi(X,\Omega^4_X)=\chi(X,\cO_X) \geq 0$.
\end{proof}

\subsection{On Conjecture \ref{aw1b}}

In this section, we prove the following variant of Conjecture \ref{aw1b}. We refer to \cite{Sa0, Sa1} for background material on mixed Hodge modules.
\begin{theorem}\label{caw2}
Let $X$ be a 
complex projective manifold with a finite morphism $f\colon X \to A$ to
an abelian variety, and let $\cM$ be a mixed Hodge module on $X$. Then for any integer $p$ one has:
\begin{equation}\label{aw2bb} \chi(X, Gr_F^p DR(\cM)) \ge 0,\end{equation}
where $Gr_F^p DR(\cM)$ are the graded pieces, with respect to the Hodge filtration, of the de Rham complex associated to  $\cM$.
\end{theorem}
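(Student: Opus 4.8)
The plan is to reduce the statement to a generic vanishing theorem for mixed Hodge modules pushed forward to the abelian variety $A$, following the Popa--Schnell circle of ideas. First I would observe that since $f\colon X \to A$ is finite, hence affine, the pushforward $f_+\cM$ (as a mixed Hodge module on $A$) is supported in a single perverse degree, and more importantly the de Rham functor commutes with the (derived) pushforward: $Rf_*\, Gr_F^p DR_X(\cM) \simeq Gr_F^p DR_A(f_+\cM)$ as coherent complexes on $A$, up to the usual shift bookkeeping coming from the difference between $D$-module and $\cO$-module pushforward. Because $f$ is finite, $Rf_* = f_*$ is exact on coherent sheaves, so $\chi(X, Gr_F^p DR(\cM)) = \chi(A, Gr_F^p DR_A(f_+\cM))$, and the Euler characteristic of each graded piece of the de Rham complex on $A$ is what we must show is non-negative.

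The key input is the Popa--Schnell Nakano-type generic vanishing theorem on abelian varieties: for a mixed Hodge module $\cN$ on an abelian variety $A$, and for generic $\alpha \in \mathrm{Pic}^0(A)$ (more precisely, outside a finite union of torsion translates of abelian subvarieties), one has $H^i(A, Gr_F^p DR(\cN) \otimes \alpha) = 0$ for all $i \neq 0$. I would apply this with $\cN = f_+\cM$. Since tensoring by a topologically trivial line bundle $\alpha$ does not change the Euler characteristic, we get
\begin{equation*}
\chi(A, Gr_F^p DR_A(f_+\cM)) = \chi(A, Gr_F^p DR_A(f_+\cM) \otimes \alpha) = \dim H^0(A, Gr_F^p DR_A(f_+\cM) \otimes \alpha) \geq 0
\end{equation*}
for such generic $\alpha$, which gives exactly the desired inequality after transporting back along $f$.

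The steps, in order: (1) verify the compatibility $Rf_*\, Gr_F^p DR_X(\cM) \simeq Gr_F^p DR_A(f_+\cM)$, keeping careful track of the Hodge-filtration indexing convention (the footnote's $F^p = F_{-p}$ identification) and the cohomological shift in Saito's pushforward for $D$-modules versus $\cO$-modules; (2) use finiteness of $f$ to pass to Euler characteristics on $A$; (3) invoke the Nakano-type generic vanishing statement of Popa--Schnell on $A$ applied to $f_+\cM$; (4) conclude via invariance of $\chi$ under twisting by $\mathrm{Pic}^0$. I expect the main obstacle to be Step (1): making the de Rham/pushforward compatibility precise at the level of the filtered de Rham complex (not just its underlying complex), so that the graded pieces match on the nose, including all shifts and Tate twists, and confirming that $f_+\cM$ is indeed a bona fide mixed Hodge module (a single one, in degree zero) rather than a complex — which is where finiteness of $f$, hence exactness of $f_+$ on the perverse/Hodge-module level, is essential. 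The rest is a formal consequence of the already-available generic vanishing machinery.
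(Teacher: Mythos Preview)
Your proposal is correct and follows essentially the same route as the paper: push forward along the finite map $f$ to get a mixed Hodge module on $A$, use the compatibility of $Gr_F^p DR$ with proper pushforward to transfer the Euler characteristic to $A$, and then invoke the Popa--Schnell generic vanishing result \cite[Corollary 2.7]{PS} together with the invariance of $\chi$ under twist by $\mathrm{Pic}^0$. The paper handles your Step~(1) in one line by citing Saito's compatibility of $Gr_F^p DR$ with proper pushforward, and the concern about $f_+\cM$ sitting in a single degree is in fact unnecessary since the Popa--Schnell statement applies to complexes of mixed Hodge modules.
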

\begin{proof}
Since $f$ is a finite morphism, $\cN:=f_*\cM=f_!\cM$ is a mixed Hodge module on the abelian variety $A$. And since $f$ is proper and the functor $Gr_F^p DR$ commutes with proper pushforward, we have:
$$\chi(X, Gr_F^p DR(\cM))=\chi(A, Gr_F^p DR(\cN)).$$
Finally, by \cite[Corollary 2.7]{PS}, there exists $L \in Pic^0(A)$ such that  
\begin{center} $H^i(A, Gr_F^pDR(\cN)\otimes L)=0$, for all $i \neq 0$. \end{center}
Therefore,
$$\chi(A, Gr_F^p DR(\cN)) = \chi(A, Gr_F^p DR(\cN)\otimes L) \ge 0,$$
which proves the assertion.
\end{proof}

\subsection{On Conjecture \ref{ch}}\label{Hodgef}
In this section we discuss various aspects of Conjecture \ref{ch}. 

First, it was shown in \cite[Proposition 3.10]{DPS} that, if $TX$ is nef, then one of the two  situations occurs:

$(i)$ $c_1(X)^n=c_1(TX)^n=0$, 
 in which case it follows that all Chern polynomials of $X$ of weighted degree $2n$ vanish as well (cf. \cite[Corollary 2.7]{DPS}). In particular, by Riemann-Roch, $\chi(X,\Omega^p)=0$ for all $p$, and Conjecture \ref{ch} is true in this case.

$(ii)$ $c_1(X)^n>0$, in which case $X$ is a Fano manifold (i.e., a complex projective manifold with $K_X^{-1}$ ample). Then the Kodaira vanishing theorem implies that 
$$H^q(X,\cO_X)=H^q(X,K_X \otimes K_X^{-1})=0, \ \ \ {\rm for} \ q\geq 1.$$
Hence, $\chi^0(X)=\chi(X,\cO_X)=\dim H^0(X, \cO_X)=1$. By Serre duality, we further get that $(-1)^n \chi^n(X)>0$.

It therefore remains to prove Conjecture \ref{ch} in the Fano case for $0<p<n$.
In what follows, we prove this case of Conjecture \ref{ch} under the additional assumption that $X$ has a cellular decomposition, in the sense that there is a chain of Zariski closed subsets $X_i \subseteq X$ such that $X_i\setminus X_{i+1}$ is a union of affine spaces. More precisely, we have the following.

\begin{prop}\label{Fano}
If the complex projective manifold $X$ has a cellular decomposition (e.g., $X$ is rational homogenous),  then $(-1)^p \cdot \chi^p(X) >0$ for all  $p \le \dim X$.
\end{prop}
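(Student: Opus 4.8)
The plan is to use the cellular decomposition to pin down the Hodge cohomology of $X$ completely. First I would record the standard fact that a smooth projective variety with an algebraic cellular decomposition has a CW structure with only even-dimensional cells, one cell of complex dimension $i$ for each affine space $\aaa^i$ appearing in the strata $X_i\setminus X_{i+1}$. Hence $H^{\mathrm{odd}}(X,\qq)=0$, the cycle classes of the closures of the cells form a $\zz$-basis of $H^{2k}(X,\zz)$ in each degree $2k$, and all these classes are algebraic. In particular the Hodge structure on $H^{2k}(X)$ is of pure type $(k,k)$, so $h^{p,q}(X)=0$ for $p\neq q$ and $h^{p,p}(X)=b_{2p}(X)=\#\{\text{cells of complex dimension }p\}>0$ (there is at least the fundamental class and the point class, and in any case each degree is hit).

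Next I would feed this into the Hodge–Dolbeault computation of $\chi^p(X)$. Since $X$ is projective with a cellular decomposition it is in particular Fano-type enough that we may also invoke the degeneration of the Hodge–de Rham spectral sequence (or simply the Hodge decomposition for the compact Kähler manifold $X$), giving $H^q(X,\Omega_X^p)\cong H^{p,q}(X)$. Therefore $h^{p,q}(X)=\dim H^q(X,\Omega_X^p)=0$ unless $p=q$, whence
\begin{equation}\label{chip-cell}
\chi^p(X)=\sum_{q\ge 0}(-1)^q\dim H^q(X,\Omega_X^p)=(-1)^p\, h^{p,p}(X),
\end{equation}
so that $(-1)^p\cdot\chi^p(X)=h^{p,p}(X)=b_{2p}(X)>0$ for every $0\le p\le\dim X$, which is exactly the assertion. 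For the parenthetical case, I would recall that rational homogeneous varieties $G/P$ have cellular decompositions given by the Bruhat/Schubert stratification (cells indexed by the appropriate coset representatives of Weyl group elements), so the claim for $G/P$ is a special case; this is where I would cite \cite[Section 3]{BT} and \cite[Example 1.9.1]{Fu} again.

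The only genuinely non-routine point is justifying that the cell closures give a basis of integral cohomology concentrated in even degrees and of Hodge–Tate type — i.e. turning the set-theoretic affine paving into homological information. The clean way is: an affine paving gives a filtration of $X$ by closed subsets whose successive open complements are disjoint unions of affine spaces; the long exact sequences of the pairs, together with $H^*_c(\aaa^i)=\qq$ concentrated in degree $2i$ (and torsion-free), collapse by an easy induction to show $H^*(X,\zz)$ is free, concentrated in even degrees, with ranks counting cells, and spanned by the fundamental classes $[\overline{C_j}]$; since these classes are algebraic, $H^{2k}(X)$ is pure of type $(k,k)$. I would present this induction in one paragraph rather than in gory detail, since it is classical (it is essentially \cite[Example 1.9.1]{Fu} together with the purity observation). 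Everything after that is the bookkeeping in \eqref{chip-cell}.
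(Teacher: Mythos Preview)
Your argument is essentially the paper's: use the affine paving to see that the cycle map is an isomorphism (equivalently, cohomology is spanned by algebraic classes), deduce $h^{p,q}=0$ for $p\neq q$, and conclude $(-1)^p\chi^p(X)=h^{p,p}$. The one place where you are too casual is the claim $h^{p,p}(X)=b_{2p}(X)>0$ for \emph{all} $0\le p\le n$: your parenthetical ``in any case each degree is hit'' is not a proof, since nothing in the definition of a cellular decomposition forces a cell of each complex dimension. The paper fills this by invoking Hard Lefschetz together with $h^{0,0}=1$; equivalently you may note that the $p$-th power of the hyperplane (or K\"ahler) class is nonzero in $H^{2p}(X)$. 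Add one of these sentences and the proof is complete. (Also, the aside about $X$ being ``Fano-type enough'' is irrelevant: Hodge--de Rham degeneration holds for any compact K\"ahler manifold, which you yourself note.)
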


\begin{proof}
Since $X$ has a cellular decomposition,  the cycle map $cl_X:A^*(X) \to H^*(X)$ is an isomorphism, see \cite[Example 19.1.11(b)]{Fu}. In particular, the cohomology of $X$ is generated by algebraic cycles.  Therefore the Hodge numbers of $X$ are concentrated on the diagonal, i.e., $h^{p,q}=0$ unless $p=q$. Furthermore, it follows by Hard Lefschetz and the fact that $h^{0,0}=1$ that $h^{p,p} >0$ for all $p \le \dim X$. (An alternative proof of this fact follows along the lines of \cite[Theorem 3.1]{BE}, using induction over the number of cells together with the fact that $H^i_c(\bC^k;\bQ)=0$ for $i \neq 2k$ and $H^{2k}_c(\bC^k;\Q)=\bC$ is pure of weight $2k$.)
Therefore, $$(-1)^p \cdot \chi(X, \Omega^p) = h^{p,p} >0.$$
\end{proof}

Altogether, we obtain the following.
\begin{theorem}\label{19nn}
If $X$ is a compact K\"ahler 
manifold of dimension $n$ with non-negative bisectional curvature (e.g., non-negative sectional curvature), then for $0 \leq p \leq n$ one has
\begin{equation} (-1)^p \cdot \chi^p(X) \geq 0,\end{equation}
with $\chi^p(X):=\chi(X, \Omega_X^p)$.
\end{theorem}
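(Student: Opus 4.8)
The plan is to reduce Theorem~\ref{19nn} to the two cases already discussed at the start of Subsection~\ref{Hodgef}, using Mok's classification of compact K\"ahler manifolds with non-negative bisectional curvature. First I would recall that a compact K\"ahler manifold with non-negative bisectional curvature has nef tangent bundle $TX$ (this is the semi-positivity input from \cite{DPS}), so the dichotomy from \cite[Proposition 3.10]{DPS} applies: either $c_1(X)^n=0$, in which case all weighted-degree-$2n$ Chern polynomials vanish and hence $\chi^p(X)=0$ for all $p$ by Riemann--Roch, or $c_1(X)^n>0$, in which case $X$ is Fano. In the first case the desired inequalities hold trivially (with equality), so the work is all in the Fano case.

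In the Fano case I would invoke Mok's solution of the generalized Frankel conjecture \cite{Mo}: a compact K\"ahler manifold with non-negative bisectional curvature is, up to finite \'etale cover, biholomorphic to a product of a complex Euclidean space, a compact K\"ahler manifold with vanishing first Chern class, Hermitian symmetric spaces of compact type, and complex projective spaces. Since a Fano manifold is simply-connected (by a theorem of Kobayashi, as $H^1(X,\OO_X)=0$ and rational connectedness gives simple connectivity), the finite cover is trivial, and the Euclidean and $c_1=0$ factors are excluded (they would force $c_1(X)^n=0$). Hence $X$ is a product of rational homogeneous spaces $G/P$. A product of varieties each admitting an algebraic cellular decomposition again admits one (take products of cells), so $X$ has a cellular decomposition. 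Proposition~\ref{Fano} then applies and gives $(-1)^p\chi^p(X)>0$ for all $p\le n$, which is stronger than what is claimed.

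The main obstacle, and the only genuinely delicate point, is justifying that one lands in the cellular (rational homogeneous) case rather than having to treat general Fano manifolds with nef tangent bundle — that is, verifying that Mok's structure theorem really does apply and that simple-connectedness of Fano manifolds kills all the non-homogeneous factors. Once that structural reduction is in place, everything else is bookkeeping: nefness of $TX$ from non-negative bisectional curvature, the \cite{DPS} dichotomy, stability of cellular decompositions under products, and the already-proved Proposition~\ref{Fano}. An alternative route that avoids Mok's theorem would be to quote \cite{DPS} directly for the statement that a Fano manifold with nef tangent bundle and non-negative bisectional curvature is rational homogeneous, but invoking Mok is the cleanest self-contained path.
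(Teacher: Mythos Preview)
Your proposal is correct and follows essentially the same route as the paper: nefness of $TX$ from non-negative bisectional curvature, the \cite{DPS} dichotomy reducing to the Fano case, Mok's structure theorem together with simple-connectedness of Fano manifolds to conclude that $X$ is a product of projective spaces and Hermitian symmetric spaces, and then Proposition~\ref{Fano}. You are a bit more explicit than the paper in spelling out why the Euclidean and $c_1=0$ factors disappear and why products of cellular varieties remain cellular, but the argument is the same.
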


\begin{proof} First recall that the bisectional curvature can be written as a positive linear combination of two sectional curvatures (e.g., see \cite{Z}). So if $X$ has non-negative sectional curvature, then it also has a non-negative bisectional curvature.

Secondly, our assumptions imply that the tangent bundle $TX$ is nef. Indeed,
a compact K\"ahler manifold $X$ with non-negative bisectional curvature has (by definition) a tangent bundle $TX$ which is Griffiths semipositive. It is also known that Griffiths semipositive bundles are nef, cf. \cite{DPS}.

As above, it suffices to further restrict our attention to the case when $X$ is a Fano manifold. Then $X$ is simply-connected, and hence, by Mok's theorem \cite[Main Theorem]{Mo},  $X$ is a product of complex projective spaces and compact Hermitian symmetric spaces (of rank $\geq 2$), all of which are known to admit a cellular decomposition. Hence $X$ admits a cellular decomposition, and the claim follows now by Proposition \ref{Fano}.
\end{proof}

\begin{remark}\label{CL} By results of Carrell-Liebermann \cite[Theorem 1]{CaLi}, an alternative approach to proving the diagonal concentration of Hodge numbers, and thus to prove Conjectures \ref{Fan2} and  \ref{ch}, is to show that $X$ admits a holomorphic vector field with only isolated zeros. Note that, if $X$ is as in $(ii)$ above, then Bott's residue formula implies that any vector field on $X$ has a non-empty vanishing locus. Moreover, if $TX$ is globally generated, Bertini's theorem tells us that the zero locus of a general vector field on $X$ consists of only isolated points.
\end{remark} 

\begin{remark}
As already mentioned in the introduction, Proposition \ref{Fano} can be seen as reducing the proof of Conjecture \ref{ch} to the Campana-Peternell Conjecture \ref{cp}. This also applies to the situation described in the previous remark, since rational homogeneous spaces are known to carry holomorphic vector fields with only isolated zeros. Moreover, while the nefness of the tangent bundle implies by \eqref{spos} the non-negativity of the top Segre class $s_n(X):=s_n(TX)$, it is known that rational homogeneous spaces have positive top Segre class \cite[Theorem 4.1]{DPP}. This motivates the reduction in \cite{DPP} of Conjecture \ref{cp} to the non-vanishing of the top Segre class.
\end{remark}

\section{Acknowledgments}

We thank the anonymous referees for carefully reading the manuscript and for making several constructive suggestions.


\bibliographystyle{amsalpha}

\end{document}